\providecommand{\U}[1]{\protect\rule{.1in}{.1in}}
\newtheorem{theorem}{Theorem}[section]
\newtheorem{lemma}{Lemma}[section]
\newtheorem{assumption}{Assumption}[section]
\theoremstyle{definition}
\newtheorem{definition}{Definition}[section]
\theoremstyle{remark}
\numberwithin{equation}{section}
\begin{document}
	\begin{frontmatter}
		
		\title{Solvability and Optimal Controls of Impulsive Stochastic  Systems  in Hilbert Spaces}
		
		%%this line removes the date, but space is still left for it;
		%if used, remove the \vspace{-1cm}
		%\date{}
		
		%this gives the date in the form Mon 30 Jan 2012, 8:57pm;
		%if used, retain the \vspace{-1cm}
		%\date{\shortdayofweekname{\day}{\month}{\year}{ }\mydate\today}

		\author[1]{Javad A. Asadzade}
		\ead{javad.asadzade@emu.edu.tr}
            \author[1,2]{Nazim I. Mahmudov}
		\ead{nazim.mahmudov@emu.edu.tr}
		\cortext[cor1]{Corresponding author}

            \address[1]{Department of Mathematics, Eastern Mediterranean University, Mersin 10, 99628, T.R. North Cyprus, Turkey}
	\address[2]{Research Center of Econophysics, Azerbaijan State University of Economics (UNEC), Istiqlaliyyat Str. 6, Baku, 1001,  Azerbaijan}
	
		% Latex won't make the title unless told:
		%\maketitle
		
		%%to remove the space left for date, use:

		\begin{abstract}
           This paper investigates the solvability and optimal control of a class of impulsive stochastic differential equations (SDEs) within a Hilbert space setting. First, we establish the existence and uniqueness of mild solutions for the proposed impulsive stochastic system, leveraging fixed-point theorems and appropriate analytical techniques. Next, we identify and derive the necessary conditions for the existence of optimal control pairs, ensuring the feasibility and effectiveness of the control solutions. Finally, to validate and demonstrate the practical applicability of our theoretical findings, we provide a detailed example showcasing the utility of the results in real-world scenarios.
		\end{abstract}

		\begin{keyword}
Stochastic evolution equations, impulsive, optimal control.
		\end{keyword}
  
	\end{frontmatter}
\section{Introduction}\label{sec1}

The impulsive nature of many evolutionary processes is characterized by brief disturbances at particular times. Such dynamic impulsive systems are prevalent in areas such as robotics, telecommunications, computer science, neural networks, biological systems, population dynamics, genetics, and artificial intelligence. For a comprehensive analysis of impulsive systems, see \cite{41}.

SDEs are mathematical models used to describe systems influenced by random phenomena or uncertainties. They extend ordinary differential equations by incorporating stochastic processes, typically modeled using Wiener processes (Brownian motion). SDEs are widely applied in fields like physics, finance, biology, and engineering to capture dynamic systems with inherent randomness. Impulsive SDEs are an extension of SDEs that incorporate sudden, discontinuous changes (impulses) in the system's state at specific moments in time. These models are particularly useful in describing real-world systems that experience abrupt perturbations or interventions, such as shocks, switching dynamics, or resets, alongside continuous random fluctuations.The study of SDEs under impulsive effects has garnered significant attention in recent years due to their  range of applications in fields such as finance, engineering, and biological systems. These systems are characterized by sudden changes in the state of the system at specific moments, which can model real-world phenomena more accurately than continuous processes alone. The main advantage of incorporating impulsive effects is that they provide a more realistic representation of systems that undergo abrupt changes, making the models more applicable to real-world scenarios. Impulsive SDEs have diverse applications across various disciplines:
\begin{itemize}
    \item \textbf{Engineering:}
    Control systems with sudden adjustments (e.g., robotic movements or circuit switching).

    \item \textbf{Biology:}
    Population dynamics with sudden environmental events or medical interventions.

    \item \textbf{Finance:}
    Modeling markets with abrupt shocks or policy changes.

    \item \textbf{Physics:}
    Systems undergoing random motion interrupted by external impulses (e.g., particle collisions).

    \item \textbf{Ecology:}
    Species dynamics with seasonal effects or catastrophic events.
\end{itemize}
As mentioned above, impulsive systems play a crucial role in modeling real-world phenomena that involve abrupt changes or discontinuities. They find applications in fields such as control theory, numerical analysis, and complex networks. For instance, studies in \cite{2,4,46} addressed the controllability of linear first-order, fractional, and semilinear impulsive systems, emphasizing their importance in understanding and influencing dynamic systems with sudden interventions.

In control theory, impulsive systems have been used to solve optimal control problems for SDEs with random impulses (\cite{43,44}). In \cite{43}, a random compensation function was introduced into the performance index, leading to a new Hamilton-Jacobi-Bellman (HJB) equation. The existence/uniqueness of its viscosity solution were proven. Similarly, \cite{44} defined a refined performance index and established that the value function satisfies the random impulse HJB equation.

Applications also extend to network dynamics. In \cite{45}, global mean-square exponential synchronization of random impulsive networks was achieved using Lyapunov theory, linear matrix inequalities, and stochastic process tools. Synchronization criteria were derived, and theoretical findings were validated with a numerical example.

These studies showcase the versatility of impulsive systems in tackling challenges across disciplines, enabling effective control, optimization, and stability of systems experiencing rapid changes.

Recently, many researchers have focused on impulsive differential equations, particularly impulsive evolution equations (see \cite{1,2,3,4,6,8,9,12,19,20,21,22,23,24,25,30,31,32,33,34,35,36,37,38,39,40,41,42,43,44,45,46}). For instance, Mahmudov in \cite{2} examines the following issue:

\begin{equation}
	\begin{cases}
		y^{\prime}(t)=Ay(t)+Bu(t), & t\in[0,T]\setminus\{t_{1},\dots,t_{n}\},\\
		\Delta y(t_{k+1})=D_{k+1}y(t_{k+1})+E_{k+1}v_{k+1}, & k=0,\dots,n-1,\\
		y(0)=y_{0}.
	\end{cases}
\end{equation}
Here, the state variable \( y(\cdot) \) is defined in a Hilbert space \( H \), equipped with the norm \( \|y\| = \sqrt{\langle y, y \rangle} \). The control function \( u(\cdot) \) is an element of \( L^2([0, T], U) \), where \( U \) represents another Hilbert space. Additionally, \( v_k \in U \) for \( k = 1, \ldots, n \).

In that article, Mahmudov presented the solution representation utilizing semigroup and impulsive operators. He also established a criteria for the approximate controllability of impulsive linear evolution systems through the framework of an impulsive resolvent operator.

The main advantage of the impulsive effect described in \cite{2}, \(\Delta y(t_{k+1}) = D_{k+1} y(t_{k+1}) + E_{k+1} v_{k+1}\), is that it allows for the modeling of instantaneous and significant changes in the system state at specific moments. This form of impulsive effect captures the impact of sudden external inputs or internal adjustments that occur abruptly, enabling a more accurate depiction of systems where such events are frequent and critical. By incorporating these impulses, the model can better represent real-world dynamics where abrupt shifts can dramatically influence the system's behavior.

Mathematically, the impulsive effect \(\Delta y(t_{k+1}) = D_{k+1} y(t_{k+1}) + E_{k+1} v_{k+1}\) indicates that at each impulsive moment \( t_{k+1} \), the state \( y(t_{k+1}) \) is instantaneously altered by a linear transformation \( D_{k+1} \) and an additional input \( E_{k+1} v_{k+1} \). This formulation is particularly useful for modeling scenarios where the state experiences abrupt changes due to external forces or internal system dynamics.

In contrast, standard impulsive effects typically involve sudden changes in the system state at predefined times, usually represented by a jump condition such as:
\[
\Delta y(t_k) = I_k(y(t_k^-))
\]
where \( I_k \) is an impulse function and \( y(t_k^-) \) denotes the state just before the impulse at time \( t_k \). These standard impulses are useful for modeling systems with regular or predictable disturbances. However, they may not fully capture the complexity of systems that experience both predictable and unpredictable impulses. The impulse function \( I_k \) usually describes a predefined transformation or adjustment applied to the state, which might be less flexible in scenarios involving complex, abrupt changes.

Thus, while the standard impulse model \(\Delta y(t_k) = I_k(y(t_k^-))\) is suitable for simpler or more predictable impulsive systems, the model \(\Delta y(t_{k+1}) = D_{k+1} y(t_{k+1}) + E_{k+1} v_{k+1}\) offers a more nuanced and flexible approach for representing intricate and frequent abrupt changes in the system state.

Therefore, the factors we have mentioned above make the investigation of the qualitative properties of such impulsive evolution equations even more relevant.
On the other hand, stochastic systems have always been of great interest to researchers, which is why the stochastic version of this system remains perpetually significant (see \cite{5,6,7,9,10,12,14,16,19,21,22,23,24,25,26,27,28,29,30,31,32,33,34,35,36,38,40,41}).

The dynamic system under consideration in this paper is governed by a SDE with impulsive effects, described by: 	
\begin{equation}\label{eq1}
	\begin{cases}
		y^{\prime}(t)=Ay(t)+Bu(t)+g(t,y(t))+h(t,y(t))\frac{dW}{dt}, & t\in[0,T]\setminus\{t_{1},\dots,t_{n}\},\\
		\Delta y(t_{k+1})=D_{k+1}y(t_{k+1})+E_{k+1}v_{k+1},& k=0,\dots,n-1,\\
		y(0)=y_{0}.
	\end{cases}
\end{equation}
where, the state variable \( y(\cdot) \) is defined in a Hilbert space \( H \), equipped with the norm \( \|y\| = \sqrt{\langle y, y \rangle} \). The control function \( u(\cdot) \) is an element of \( L^2([0, T], U) \), where \( U \) represents another Hilbert space. Additionally, \( v_k \in U \) for \( k = 1, \ldots, n \).

In this setting, \( A \) serves as the infinitesimal generator of a strongly continuous semigroup of bounded linear operators \(  T(t) \) on the Hilbert space \( H \). The associated linear operators include \( B \in L(U, H) \), \( D_k \in L(H, H) \), and \( E_k \in L(U, H) \).

At the points of discontinuity \( t_k \) (where \( k = 1, \ldots, n \) and \( 0 = t_0 < t_1 < \cdots < t_n < t_{n+1} = T \)), the state variable experiences a jump described by \( \Delta y(t_k) = y(t_k^+) - y(t_k^-) \). Here, \( y(t_k^\pm) = \lim_{h \to 0^\pm} y(t_k + h) \), with the assumption that \( y(t_k^-) = y(t_k) \).

The notation \( \prod_{j=1}^{k} A_j \) represents the composition of operators \( A_1, A_2, \ldots, A_k \) in that order. When the lower limit of the product exceeds the upper limit, such as \( \prod_{j=k+1}^{k} A_j \), the product is defined to equal 1. Similarly, \( \prod_{j=k}^{1} A_j \) denotes the reverse composition \( A_k, A_{k-1}, \ldots, A_1 \), and \( \prod_{j=k}^{k+1} A_j \) is also defined as 1.
	
This paper focuses on several important aspects of stochastic impulsive systems. First, we explore the existence and uniqueness of mild solutions, which are crucial for understanding how the system behaves over time. By using fixed point theorems and the properties of semigroups, we determine the conditions that ensure a unique mild solution exists.

Furthermore, we explore the optimal control problem for these systems. The goal is to find control functions that optimize a specific performance criterion. Using methods from stochastic control theory and functional analysis, we identify the necessary conditions for the existence of optimal control pairs. We also show how these conditions apply through an illustrative example.

The contributions of this paper provide a comprehensive framework for analyzing and controlling impulsive stochastic systems in Hilbert spaces, extending existing theories and offering new insights into their practical implementations.

\section{Mathematical Preliminaries}
We begin by considering a filtered probability space \((\Omega, \mathcal{F}, \{\mathcal{F}_t\}_{t \geq 0}, \mathbb{P})\), where \(\{\mathcal{F}_t\}_{t \geq 0}\) is a right-continuous and increasing filtration, and \(\mathcal{F}_0\) includes all \(\mathbb{P}\)-null sets. Let \(\{e_k : k \in \mathbb{N}\}\) denote a complete orthonormal basis for \(K\). A cylindrical Brownian motion \(\{W(t) : t \geq 0\}\), defined on this probability space, is a stochastic process taking values in \(K\).

The covariance operator \(Q \geq 0\) associated with \(W(t)\) is nuclear and has a finite trace, \(\operatorname{Tr}(Q) = \sum_{k=1}^\infty \lambda_k = \lambda < \infty\). This operator satisfies \(Qe_k = \lambda_k e_k\) for all \(k \in \mathbb{N}\). The sequence \(\{W_k(t) : k \in \mathbb{N}\}\) represents independent, standard one-dimensional Wiener processes defined on \((\Omega, \mathcal{F}, \{\mathcal{F}_t\}_{t \geq 0}, \mathbb{P})\). The cylindrical Brownian motion can be expressed as:
\[
W(t) = \sum_{k=1}^\infty \sqrt{\lambda_k} W_k(t) e_k, \quad t \geq 0.
\]
Furthermore, we assume \( \mathcal{F}_t = \sigma\{W(\tau) : 0 \leq \tau \leq t\} \), which is the sigma-algebra generated by the cylindrical Brownian motion \(\{W(\tau)\}_{\tau \geq 0}\). At the terminal time \(T\), we have \(\mathcal{F}_T = \mathcal{F}\), ensuring that the filtration is complete.

The space \(L^2_0 = L^2(Q^{1/2}K, H)\) consists of Hilbert-Schmidt operators from \(Q^{1/2}K\) to \(H\). The inner product on this space is defined as \(\langle \varphi, \psi \rangle = \operatorname{Tr}(\varphi Q \psi^*)\), where \(\varphi, \psi \in L^2_0\). This space is separable and forms a Hilbert space under the given inner product.

The collection of all \(\mathcal{F}_T\)-measurable, square-integrable random variables taking values in the Hilbert space \(H\) is denoted by \(L^2(\Omega, H)\). This set constitutes a Banach space when endowed with the norm:
\[
\|y\|_{L^2} = \sqrt{\mathbb{E} \|y(\omega)\|^2},
\]
where \(\mathbb{E}\) denotes the expectation with respect to the probability measure \(\mathbb{P}\).

The Banach space \(C([0, T], L^2(\Omega, H))\) consists of all continuous functions from \([0, T]\) to \(L^2(\Omega, H)\), with the norm:
\[
\|y\|_C = \sqrt{\sup_{t \in [0, T]} \mathbb{E} \|y(t)\|^2}.
\]

The space \(PC([0, T], L^2(\Omega, H))\) is defined as:
\begin{align*}
PC([0, T], L^2(\Omega, H)) &= \{ y : [0, T] \to L^2(\Omega, H) \mid y(t) \text{ is continuous at } t \neq t_i, \\
&\text{left-continuous at } t = t_i, \text{ and has a right limit } y(t_i^+)\text{ for } i = 1, 2, \ldots, n \}.
\end{align*}

For processes in \(PC([0, T], L^2(\Omega, H))\) that are \(\mathcal{F}_t\)-adapted and measurable, we define the norm:
\[
\|y\|_{PC} = \sqrt{\sup_{t \in [0, T]} \mathbb{E} \|y(t)\|^2}.
\]
The pair \((PC, \| \cdot \|_{PC})\) forms a Banach space under this norm.

We assume \(U\) is a separable Hilbert space where the controls \(u\) take their values. The space \(L^2_F([0, T], U)\) is defined as:
\[
L^2_F([0, T], U) = \{ u : [0, T] \times \Omega \to U \mid u \text{ is } \mathcal{F}_t\text{-adapted, measurable, and } \mathbb{E}\int_0^T \|u(t)\|^2 \, dt < \infty \}.
\]

The admissible control set \(U_{ad}\) is given by:
\[
U_{ad} = \{ u(\cdot) \in L^2_F([0, T], U) \mid u(t) \in Y \; \forall t \in [0, T] \},
\]
where \(Y\) is a nonempty, bounded, closed, and convex subset of \(U\). We also consider \(B \in L(U, H)\), the space of bounded linear operators mapping \(U\) to \(H\).
\bigskip

A semigroup \(\{T(t)\in L(X)\}_{t \geq 0} \) satisfies the following properties:
\bigskip

1. \(T(s)T(\tau) = T(s + \tau)\) $\forall$ \(s, \tau \geq 0\).
\bigskip

2. \(T(0) = \mathcal{I}\), where \(\mathcal{I}\) is the identity operator in \(X\).
\bigskip

This semigroup property is essential for analyzing the behavior of operators in Banach spaces and their evolution over time.

\begin{lemma}[see \cite{7}]\label{lemm2}
For any \(p \geq 1\) and an arbitrary predictable process \(\chi(\cdot)\) valued in \(L^2_0\), the following inequality holds:
\[
\sup_{s \in [0, t]} \mathbb{E} \left\| \int_0^s \chi(\tau) \, dW(\tau) \right\|^{2p} \leq (p(2p - 1))^p \left\{ \int_0^t \left( \mathbb{E} \|\chi(s)\|_{L^2_0}^{2p} \right)^{\frac{1}{p}} \, ds \right\}^p,
\]
for all \(t \in [0, \infty)\).
\end{lemma}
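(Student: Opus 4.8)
The plan is to exploit the martingale structure of the stochastic integral together with the infinite-dimensional It\^o formula applied to the map $x \mapsto \|x\|^{2p}$; the sharp constant $(p(2p-1))^p$ will emerge directly from the Hessian of this map, while the precise form of the right-hand side will follow from a pointwise H\"older estimate. Throughout, write $M(s) = \int_0^s \chi(\tau)\,dW(\tau)$, which is an $H$-valued, $\mathcal{F}_s$-adapted, square-integrable martingale. First I would reduce to the case of a bounded, simple (elementary) integrand $\chi$ by a standard density and localization argument, so that every quantity appearing below is finite and the stochastic integral is a genuine martingale rather than merely a local one; the general case then follows by passing to the limit with Fatou's lemma.

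Next I would apply It\^o's formula to $\|M(s)\|^{2p}$. For $p \geq 1$ the function $f(x) = \|x\|^{2p}$ is twice continuously differentiable, and a direct computation of its second derivative yields the pointwise bound
\[
\tfrac12\,\mathrm{tr}\big[f''(x)\,(\chi Q^{1/2})(\chi Q^{1/2})^*\big] \leq p(2p-1)\,\|x\|^{2p-2}\,\|\chi\|_{L^2_0}^2 .
\]
Taking expectations annihilates the first-order (martingale) term, leaving
\[
\mathbb{E}\|M(t)\|^{2p} \leq p(2p-1)\,\mathbb{E}\!\int_0^t \|M(s)\|^{2p-2}\,\|\chi(s)\|_{L^2_0}^2\,ds .
\]
This is exactly the step that produces the constant $p(2p-1)$.

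The third step is to decouple the two factors inside the integral by H\"older's inequality with conjugate exponents $p/(p-1)$ and $p$, which gives
\[
\mathbb{E}\big[\|M(s)\|^{2p-2}\|\chi(s)\|_{L^2_0}^2\big] \leq \big(\mathbb{E}\|M(s)\|^{2p}\big)^{\frac{p-1}{p}}\big(\mathbb{E}\|\chi(s)\|_{L^2_0}^{2p}\big)^{\frac1p}.
\]
Writing $\Phi(t) = \sup_{s\in[0,t]}\mathbb{E}\|M(s)\|^{2p}$ and using that the resulting right-hand side is nondecreasing in $t$, I would arrive at the self-referential inequality
\[
\Phi(t) \leq p(2p-1)\,\Phi(t)^{\frac{p-1}{p}} \int_0^t \big(\mathbb{E}\|\chi(s)\|_{L^2_0}^{2p}\big)^{\frac1p}\,ds .
\]
Dividing by $\Phi(t)^{(p-1)/p}$ (legitimate after the reduction step, since $\Phi(t)$ is then finite, and trivial when $\Phi(t)=0$) and raising both sides to the power $p$ yields precisely the claimed bound.

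The main obstacle I anticipate is analytic rather than algebraic: rigorously justifying the infinite-dimensional It\^o formula for $\|\cdot\|^{2p}$ and controlling the passage to the limit from simple to general predictable integrands, while ensuring that the local martingale terms are true martingales so that their expectations vanish. By contrast, the Hessian computation and the H\"older bookkeeping that close the argument are routine once this foundation is secured.
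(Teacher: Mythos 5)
The paper does not prove this lemma at all: it is imported as a known result (the citation to \cite{7} notwithstanding, this is the standard maximal-moment inequality for Hilbert-space stochastic integrals found, e.g., in Da Prato and Zabczyk \cite{9}). Your argument is correct and is essentially the canonical proof of that result: reduction to bounded simple integrands so the stochastic integral is a true martingale with finite moments, It\^o's formula for \(f(x)=\|x\|^{2p}\) with the Hessian bound \(\tfrac12\mathrm{tr}[f''(x)(\chi Q^{1/2})(\chi Q^{1/2})^*]\le p(2p-1)\|x\|^{2p-2}\|\chi\|^2_{L^2_0}\) producing the constant, H\"older with exponents \(p/(p-1)\) and \(p\), and absorption of \(\Phi(t)^{(p-1)/p}\) into the left-hand side before raising to the power \(p\); the final passage to general predictable \(\chi\) via Fatou is standard, as you note. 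There is no gap of substance in the plan.
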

We now consider Krasnoselskii’s Fixed Point Theorem (KFPT), which is very important in
mathematical analysis and applications, especially in the study of functional equations
and nonlinear situations. This theorem is an effective technique for demonstrating the
existence of solutions to problems for which a direct construction or explicit solution
is difficult or impossible to find.

\begin{lemma}\cite{8}\label{Kras}
Let  \(Y\) be a bounded, closed, convex subset of a Banach space \(X\), and \(F_1, F_2 : Y \to X\) two mappings s. t.  \(F_1y + F_2z \in Y\) for all \(y, z \in Y\). If (i) \(F_1\) is a contraction; (ii) \(F_2\) is completely continuous, then \(F_1y + F_2y = y\) has a solution in \(Y\).
\end{lemma}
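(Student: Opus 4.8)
The plan is to prove the result by combining the Banach contraction principle with Schauder's fixed point theorem, reducing the coupled equation $y = F_1 y + F_2 y$ to a single fixed-point problem for an auxiliary self-map of $Y$. Let $k \in [0,1)$ denote the contraction constant of $F_1$. The first step is to freeze the second argument: for each fixed $z \in Y$, consider $G_z : Y \to X$ given by $G_z(y) = F_1 y + F_2 z$. By hypothesis $G_z(Y) \subseteq Y$, and since $Y$ is closed in the Banach space $X$ it is complete; moreover $G_z$ inherits the contraction constant $k$ from $F_1$. Hence the Banach contraction principle yields a unique fixed point, which I denote $J(z) \in Y$, characterised by $J(z) = F_1(J(z)) + F_2 z$. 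This defines a map $J : Y \to Y$, and any fixed point $z^{*}$ of $J$ satisfies $z^{*} = F_1 z^{*} + F_2 z^{*}$, which is precisely the assertion of the lemma.

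The second step is to verify that $J$ is continuous. Writing the defining relation at two points $z_1, z_2 \in Y$ and using the contraction property of $F_1$, I would estimate
\[
\|J(z_1) - J(z_2)\| \le \|F_1(J(z_1)) - F_1(J(z_2))\| + \|F_2 z_1 - F_2 z_2\| \le k\,\|J(z_1) - J(z_2)\| + \|F_2 z_1 - F_2 z_2\|,
\]
so that $\|J(z_1) - J(z_2)\| \le (1-k)^{-1}\|F_2 z_1 - F_2 z_2\|$. Continuity of $J$ then follows at once from the continuity of $F_2$ (which is implied by its complete continuity).

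The crux of the argument, and the step I expect to be the main obstacle, is establishing that $J(Y)$ is relatively compact so that a Schauder-type theorem applies. The key observation is to rewrite the defining relation as $(I - F_1)(J(z)) = F_2 z$, i.e. $J = (I - F_1)^{-1} \circ F_2$, where $I$ denotes the identity. Because $F_1$ is a $k$-contraction, $I - F_1$ is injective on $Y$ with a Lipschitz inverse of constant $(1-k)^{-1}$: indeed, for $w_i = (I-F_1)(y_i)$ one has $\|w_1 - w_2\| \ge (1-k)\|y_1 - y_2\|$. Since $F_2$ is completely continuous, $F_2(Y)$ is relatively compact, hence totally bounded; applying the uniformly continuous map $(I - F_1)^{-1}$ preserves total boundedness, so $J(Y) = (I-F_1)^{-1}\bigl(F_2(Y)\bigr)$ is totally bounded and therefore relatively compact in the complete space $X$.

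Finally, $J : Y \to Y$ is a continuous self-map of the bounded, closed, convex set $Y$ with relatively compact image, so Schauder's fixed point theorem furnishes a fixed point $z^{*} = J(z^{*}) \in Y$, which completes the proof. The only delicate points to double-check are that the range of $I - F_1$ restricted to $Y$ indeed contains $F_2(Y)$ (automatic, since $F_2 z = (I-F_1)(J(z))$ by construction) and that the Lipschitz inverse extends to the closure of $F_2(Y)$; both are routine.
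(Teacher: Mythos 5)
Your proposal is correct, and it is the classical proof of Krasnoselskii's fixed point theorem: freeze $z$, use the Banach contraction principle to define $J(z)$ as the unique solution of $y = F_1 y + F_2 z$, observe $J = (I-F_1)^{-1}\circ F_2$ is continuous with totally bounded range, and conclude via Schauder. The paper itself offers no proof of this lemma --- it is quoted from the cited reference --- and your argument coincides with the standard one found in the literature, so there is nothing to compare beyond noting that your ``delicate point'' about extending $(I-F_1)^{-1}$ to the closure of $F_2(Y)$ is not actually needed: total boundedness of $F_2(Y)$ alone, pushed through the Lipschitz map $(I-F_1)^{-1}$, already gives relative compactness of $J(Y)$ in the complete space $X$.
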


\section{Existence and uniqueness of mild solution}
In this section, we will prove the existence and uniqueness of the mild solution to \eqref{eq1}. For this purpose, first of all, similarly to Lemma 3 in \cite{3}, we define the mild solution of \eqref{eq1} in the following definition.
\bigskip

\begin{definition}
	For any given \(u \in U_{ad}\), a stochastic process \(y\) is said to be a mild solution of \eqref{eq1} on \([0,T]\) if \(y \in PC([0,T],L^{2}(\Omega,H))\) and satisfies the following conditions:

\bigskip

(i) \(y(t)\) is measurable and adapted to \(\mathcal{F}_{t}\).

\bigskip

(ii) \(y(t)\) satisfies the integral equation:
	\begin{align}
	y(t)=
		\begin{cases}
			 T(t)y(0)+\int_{0}^{t}  T(t- \tau)\big[Bu( \tau)+g(\tau,y( \tau))\big]\, d \tau\\
			+\int_{0}^{t}  T(t- \tau)h(\tau,y( \tau))\, dW( \tau),\,  0\leq t\leq t_{1},\\
			\\
		 T(t-t_{k})y(t^{+}_{k})+\int_{t_{k}}^{t}  T(t- \tau)\big[Bu( \tau)+g(\tau,y( \tau))\big]\, d \tau\\
		+\int_{t_{k}}^{t}  T(t- \tau)h(\tau,y( \tau))\, dW( \tau),\, t_{k}<t\leq t_{k+1},\, k=1,2,\dots, n,\\
		\end{cases}
	\end{align}
where
\begin{align}
	y(t^{+}_{k})=&\prod_{j=k}^{1}(\mathcal{I}+D_{j}) T(t_{j}-t_{j-1})y_{0}\nonumber\\
	+&\sum_{i=1}^{k}\prod_{j=k}^{i+1}(\mathcal{I}+D_{j}) T(t_{j}-t_{j-1})
	(\mathcal{I}+D_{i})\int_{t_{i-1}}^{t_{i}} T(t_{i}- \tau)Bu( \tau)d \tau\nonumber\\
		+&\sum_{i=1}^{k}\prod_{j=k}^{i+1}(\mathcal{I}+D_{j}) T(t_{j}-t_{j-1})
	(\mathcal{I}+D_{i})\int_{t_{i-1}}^{t_{i}} T(t_{i}- \tau)g(\tau,y( \tau))d \tau\\
		+&\sum_{i=1}^{k}\prod_{j=k}^{i+1}(\mathcal{I}+D_{j}) T(t_{j}-t_{j-1})
	(\mathcal{I}+D_{i})\int_{t_{i-1}}^{t_{i}} T(t_{i}- \tau)h(\tau,y( \tau))\, dW( \tau)\nonumber\\
	+&\sum_{i=2}^{k}\prod_{j=k}^{i}(\mathcal{I}+D_{j})  T(t_{j}-t_{j-1}) E_{i-1}v_{i-1}+E_{k}v_{k}.\nonumber
\end{align}
\end{definition}
To establish the main results, we first outline a set of assumptions that will be utilized in the application of KFPT.
\bigskip

\begin{assumption}\label{assump0}
    Assume that \(A\) generates a compact \(C_0\)-semigroup \(T(t)\) of linear operators on \(H\), with a uniform bound given by a constant \(M \geq 1\), s. t.  \(\|T(t)\| \leq M\) for all \(t > 0\).
\end{assumption}

\begin{assumption}\label{assump1}
    Let \(f \in C([0,T] \times H, H)\). The following conditions are assumed:
    \bigskip

    $(i)$ $\exists$ a constant \(L_g > 0\) s. t.
    \[
    \|g(t,y)\|^2 \leq L_g\big(1 + \|y\|^2\big), \quad \text{for all } t \in [0,T] \text{ and } y \in H.
    \]
    \bigskip

    $(ii)$ For some \(r > 0\), $\exists$ a constant \(\tilde{L}_g\) s. t. $\forall$ \(t \in [0,T]\) and \(y, z \in H\) with \(\|y\|^2 \leq r\) and \(\|z\|^2 \leq r\),
    \[
    \|g(t,y) - g(t,z)\|^2 \leq \tilde{L}_g \|y - z\|^2.
    \]
\end{assumption}
\bigskip

\begin{assumption}\label{assump2}
    Let \(h \in C([0,T] \times H, L^2_0)\). The following conditions are assumed:
    \bigskip

    $(i)$ $\exists$ a constant \(L_h > 0\) s. t.
    \[
    \|h(t,y)\|^2_{L^2_0} \leq L_h\big(1 + \|y\|^2\big), \quad \text{for all } t \in [0,T] \text{ and } y \in H.
    \]
    \bigskip

    $(ii)$ For some \(r > 0\), $\exists$ a constant \(\tilde{L}_h\) s. t. $\forall$ \(t \in [0,T]\) and \(y, z \in H\) with \(\|y\|^2 \leq r\) and \(\|z\|^2 \leq r\),
    \[
    \|h(t,y) - h(t,z)\|^2_{L^2_0} \leq \tilde{L}_h \|y - z\|^2.
    \]
\end{assumption}

With these assumptions in place, we are now prepared to proceed with the proof of the existence  of the mild solution for \eqref{eq1} using KFPT.
\bigskip

\begin{theorem}
 If the Assumptions \ref{assump0}, \ref{assump1} and \ref{assump2} are satisfied, then the impulsive stochastic system \eqref{eq1} has at least one mild solution in \( PC([0,T],L^{2}(\Omega,H)) \) provided that
\begin{align}\label{oi}
\max\{\mathcal{N},\mathcal{K}_{0}\}< \frac{1}{9},
\end{align}
and
\begin{align}
	\max\{M^{2}; k\}<1,
\end{align}
where
\begin{align*}
	\begin{cases}
		\mathcal{N}=M^{2}+M^{2}(T^{2}L_{g}+ TL_{h}),\\
		\mathcal{K}_0 = M^{2k+2} \prod_{j=1}^k (1 + \|D_j\|)^2 + (M^4 + M^2) \|B\|^2  N (T^2 L_g + T L_h), \\
		k=3M^{2k+2} \prod_{j=1}^k (1 + \|D_j\|)^2+3 M^{4} N(T^{2} L_{g}+ TL_{h}).
	\end{cases}
\end{align*}
\end{theorem}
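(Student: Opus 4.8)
The plan is to recast the mild-solution formula as a fixed-point equation $y=\Phi y$ on the Banach space $PC([0,T],L^2(\Omega,H))$ and to verify the hypotheses of Krasnoselskii's theorem (Lemma \ref{Kras}). First I would fix a radius $r>0$ and take $Y=B_r=\{y\in PC:\|y\|_{PC}^2\le r\}$, which is bounded, closed and convex. I would then split the solution operator as $\Phi=F_1+F_2$, placing the drift and diffusion integrals together with the propagated impulse terms $T(t-t_k)y(t_k^+)$ (whose estimates involve the products $\prod_j(\mathcal{I}+D_j)$) into the contraction piece $F_1$, and leaving the initial-data/convolution terms carried by the compact semigroup in the completely continuous piece $F_2$. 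The decomposition is chosen precisely so that the coefficient bounds collected in $\mathcal{N}$, $\mathcal{K}_0$ and $k$ separate cleanly.

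The self-mapping property $F_1y+F_2z\in B_r$ for $y,z\in B_r$ is the first computation. Here I would apply the elementary inequality $\|a_1+\dots+a_m\|^2\le m\sum_i\|a_i\|^2$ (the source of the factor $9$, hence the threshold $1/9$), estimate the Bochner integrals by Cauchy--Schwarz together with Assumption \ref{assump0} and the growth bounds in Assumptions \ref{assump1}$(i)$ and \ref{assump2}$(i)$, and control the stochastic convolution by Lemma \ref{lemm2} with $p=1$. Summing over the subinterval containing $t$ and over the impulse products yields a bound of the form $\mathbb{E}\|\Phi y(t)\|^2\le C_0+(\mathrm{coeff})\,r$; the hypotheses $\max\{\mathcal{N},\mathcal{K}_0\}<1/9$ force the coefficient of $r$ below $1$, so $r$ may be chosen large enough that $B_r$ is invariant.

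Next I would verify that $F_1$ is a contraction. For $y,z\in B_r$ I would subtract the two expressions, use the local Lipschitz bounds of Assumptions \ref{assump1}$(ii)$ and \ref{assump2}$(ii)$ on the set $\{\|y\|^2\le r\}$ to replace the $g$- and $h$-differences by $\|y-z\|^2$, again invoke Lemma \ref{lemm2} for the diffusion term, and collect the impulse factors $M^{2k+2}\prod_j(1+\|D_j\|)^2$. The resulting Lipschitz constant of $F_1$ is $k$, and the hypothesis $\max\{M^2,k\}<1$ guarantees $k<1$, so $F_1$ is a strict contraction on $PC$.

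The main obstacle is the complete continuity of $F_2$, which is exactly why Assumption \ref{assump0} demands a compact semigroup. I would first establish continuity of $F_2$ by a dominated-convergence argument resting on the continuity of $T(t)$, $g$ and $h$ and on the estimates already obtained. The harder part is relative compactness: for fixed $t$ I would exploit compactness of the semigroup through the standard shift $T(t-\tau)=T(\varepsilon)T(t-\tau-\varepsilon)$ on $[0,t-\varepsilon]$, so that $\{F_2y(t):y\in B_r\}$ appears as the image of a bounded set under a compact operator plus an arbitrarily small remainder, hence is relatively compact in $L^2(\Omega,H)$; I would then prove equicontinuity of $\{F_2y\}$ in $t$, including across each impulse point using the right limits $y(t_i^+)$, and conclude relative compactness in $PC$ by an Arzel\`a--Ascoli argument adapted to the piecewise-continuous, stochastic setting. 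Establishing equicontinuity uniformly up to the jump times $t_k$ and controlling the stochastic convolution in the compactness estimate are the delicate points. With $F_1$ a contraction, $F_2$ completely continuous, and $B_r$ invariant under $F_1+F_2$, Lemma \ref{Kras} supplies a fixed point $y\in B_r$, which is the desired mild solution of \eqref{eq1}.
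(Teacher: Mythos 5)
Your overall strategy is the paper's own: Krasnoselskii's theorem (Lemma \ref{Kras}) on a ball $\mathcal{B}_{r_0}$, invariance of the ball via the factor-$9$ Jensen inequality together with $\max\{\mathcal{N},\mathcal{K}_0\}<\tfrac19$, a contraction estimate for $F_1$ with the constant $k$, and complete continuity of the convolution part via the compact-semigroup shift $T(t-\tau)=T(\varepsilon)T(t-\tau-\varepsilon)$, followed by equicontinuity and Arzel\`a--Ascoli. However, your description of the decomposition is backwards and self-contradictory, and if one takes it literally the argument does not go through. In the paper, $F_1$ consists of the initial-data term $T(t)y(0)$ on $(t_0,t_1]$ and the propagated impulse term $T(t-t_k)y(t_k^+)$ on $(t_k,t_{k+1}]$, while $F_2$ consists of the drift and diffusion convolutions over the \emph{current} subinterval, $\int_{t_k}^{t}T(t-\tau)\big(Bu(\tau)+g(\tau,y(\tau))\big)\,d\tau+\int_{t_k}^{t}T(t-\tau)h(\tau,y(\tau))\,dW(\tau)$. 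You instead announce that ``the drift and diffusion integrals'' go into the contraction piece $F_1$ and that the ``initial-data/convolution terms'' form $F_2$.

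The placement matters for three concrete reasons. First, the stated contraction constant $k=3M^{2k+2}\prod_{j=1}^{k}(1+\|D_j\|)^2+3M^4N(T^2L_g+TL_h)$, with its Jensen factor $3$, arises exactly because $F_1y-F_1z$ contains three $y$-dependent terms (the propagated initial datum and the propagated $g$- and $h$-integrals; the $Bu$ and $E_iv_i$ terms cancel in the difference). If the current-interval convolutions also sat in $F_1$, the Lipschitz constant would acquire additional terms of the form $M^2(T^2\tilde L_g+T\tilde L_h)$ and a larger Jensen factor, and the hypothesis $k<1$ as stated would no longer cover it. Second, the compactness of $T(\varepsilon)$ is exploited precisely on the current-interval convolutions through the shift trick, so those integrals must lie in $F_2$; your own complete-continuity step tacitly assumes this, contradicting your stated split. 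Third, the hypothesis $M^2<1$, which you never invoke (you use only the half $k<1$ of the condition $\max\{M^2,k\}<1$), is exactly the contraction constant of the paper's $F_1$ on the first subinterval, where $F_1y=T(\cdot)y(0)$; moving the initial-data term into $F_2$ both discards this part of the hypothesis and raises a genuine question you would then have to answer, namely whether $y\mapsto T(t)y(0)$ is compact as a map on $L^2(\Omega,H)$ (compactness of $T(t)$ on $H$ does not hand you this). In short: your three verification steps are the right ones and match the paper, but you must fix the decomposition --- $F_1$ equals the initial datum plus the propagated impulse terms, $F_2$ equals the current-interval convolutions --- after which your estimates line up with the constants $\mathcal{N}$, $\mathcal{K}_0$, and $k$ exactly as in the theorem.
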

\begin{proof}
	For each constant $r_{0}>0$, let
	\begin{align*}
		\mathcal{B}_{r_{0}}=\Big\{y\in PC([0,T], L^{2}([0,T],H)): \, \Vert y\Vert^{2}_{PC}\leq r_{0}\Big\}.
	\end{align*}
	It is easily seen that $B_{r_{0}}$ is a convex bounded closed  in $PC([0, T], L^{2}
	([0,T], H))$.
	Introduce $F_{1}$ and $F_{2}$ on $B_{r_{0}}$ as follows:
	
	\[
	(F_{1} y)(t) =
	\begin{cases}
		 T(t)y(0), \quad \text{for}\quad  t_0 < t \leq t_1, \\
		\\
		 T(t-t_{k})\prod_{j=k}^{1}(\mathcal{I}+D_{j}) T(t_{j}-t_{j-1})y_{0}\nonumber\\
		+ T(t-t_{k})\sum_{i=1}^{k}\prod_{j=k}^{i+1}(\mathcal{I}+D_{j}) T(t_{j}-t_{j-1})
		(\mathcal{I}+D_{i})\int_{t_{i-1}}^{t_{i}} T(t_{i}-\tau)Bu(\tau)d\tau\nonumber\\
		+ T(t-t_{k})\sum_{i=1}^{k}\prod_{j=k}^{i+1}(\mathcal{I}+D_{j}) T(t_{j}-t_{j-1})
		(\mathcal{I}+D_{i})\int_{t_{i-1}}^{t_{i}} T(t_{i}-\tau)g(\tau,y(\tau))\, d\tau\\
		+ T(t-t_{k})\sum_{i=1}^{k}\prod_{j=k}^{i+1}(\mathcal{I}+D_{j}) T(t_{j}-t_{j-1})
		(\mathcal{I}+D_{i})\int_{t_{i-1}}^{t_{i}} T(t_{i}-\tau)h(\tau,y(\tau))\, dW(\tau)\nonumber\\
		+ T(t-t_{k})\sum_{i=2}^{k}\prod_{j=k}^{i}(\mathcal{I}+D_{j})  T(t_{j}-t_{j-1}) E_{i-1}v_{i-1}+ T(t-t_{k})E_{k}v_{k},\\
		  \text{for} \quad t_k < t \leq t_{k+1}, \, k \geq 1,
	\end{cases}
	\]

	\[
	(F_{2} y)(t) =
	\begin{cases}
		\int_{0}^{t} T(t-\tau)\big(Bu(\tau) + g(\tau, y(\tau))\big) \, d\tau
		+\int_{0}^{t} T(t-\tau)h(\tau, y(\tau)) \, dW(\tau), \, \text{for}\, t_0 < t \leq t_1, \\
		\\
		\int_{t_{k}}^{t} T(t-\tau)\big(Bu(\tau) + g(\tau, y(\tau))\big) \, d\tau
		+\int_{t_{k}}^{t} T(t-\tau)h(\tau, y(\tau)) \, dW(\tau), \, \text{for}\, t_k < t \leq t_{k+1}, \, k \geq 1.
	\end{cases}
	\]
	Clearly, \( y \) is a mild solution of \eqref{eq1} if and only if the operator equation \( y = F_{1}y + F_{2}y \) has a solution. To establish this, we will demonstrate that the operator \( F_{1} + F_{2} \) has a fixed point by applying Lemma \ref{Kras}. For this, we proceed in several steps.
	
	\bigskip
	
	\textbf{Step 1.}To prove that $\exists  r_{0}>0$  s. t.  \( F_{1}y + F_{2}z \in \mathcal{B}_{r_{0}} \) whenever \( y, z \in \mathcal{B}_{r_{0}} \), we proceed as follows:
	
	Choose
	\[ r_{0} \geq\max\Bigg\{ \frac{4\Big[\mathcal{S}+M^{2}\Vert B\Vert^{2}T\int_{0}^{T}\mathbb{E}\Vert u(\tau)\Vert^{2}\, d\tau\Big]}{1-4\mathcal{N}}, \, \frac{9\Big[\mathcal{K}_{1}+\mathcal{K}_{2}\int_0^T \mathbb{E} \|u(\tau)\|^2 \, d\tau \Big]}{1-9\mathcal{K}_{0}} \Bigg\} \]
	
	Then, for any pair \( y, z \in \mathcal{B}_{r_{0}} \) and \( t \in [0, T] \), by applying Lemma \eqref{lemm2}, assumptions \eqref{assump0}, \eqref{assump1} and \eqref{assump2}, along with Hölder's inequality, and Ito isometry, we obtain the following results:
	\bigskip
	
	For $t_{0}<t\leq t_{1}$,
	\begin{align*}
		&\quad \mathbb{E} \Vert (F_{1}y)(t)+(F_{2}y)(t)\Vert^{2}\leq 4\mathbb{E}\Vert  T(t)y(0)\Vert^{2}+ 4\mathbb{E}\Big\Vert\int_{0}^{t}  T(t-\tau)Bu(\tau)\, d\tau\Big\Vert^{2}\\
		&+4\mathbb{E}\Big\Vert\int_{0}^{t}  T(t-\tau)g(\tau,y(\tau))\, d\tau\Big\Vert^{2}+4\mathbb{E}\Big\Vert\int_{0}^{t}  T(t-\tau)h(\tau,y(\tau))\, dW(\tau)\Big\Vert^{2}\\
		&\leq 4 M^{2}\Vert y_{0}\Vert^{2}+4M^{2}\Vert B\Vert^{2}T \int_{0}^{t} \mathbb{E}\Vert u(\tau)\Vert^{2}\, d\tau+4M^{2}\big(TL_{g}+L_{h}\big)\int_{0}^{t}\big(1+\mathbb{E} \Vert y(\tau)\Vert^{2}\big)\, d\tau\\
		&\leq 4 M^{2} r_{0}+4M^{2}\Vert B\Vert^{2}T \int_{0}^{T}\mathbb{E}\Vert u(\tau)\Vert^{2}\, d\tau+4M^{2}(T^{2}L_{g}+TL_{h})(1+r_{0})\\	&=4\mathcal{N}r_{0}+4\Big[\mathcal{S}+M^{2}\Vert B\Vert^{2}T\int_{0}^{T}\mathbb{E}\Vert u(\tau)\Vert^{2}\, d\tau\Big]\leq r_{0},
			\end{align*}
		where
		\begin{align*}	\mathcal{N}=M^{2}+M^{2}T^{2}L_{g}+M^{2}TL_{h},\quad \mathcal{S}=M^{2}T^{2}L_{g}+M^{2}TL_{h}.
		\end{align*}

Given \( t_k < t \leq t_{k+1} \) for \( k \geq 1 \), we aim to derive an inequality using the Jensen inequality for the expectation of the norm square of the sum of two functionals \( (F_1y)(t) \) and \( (F_2y)(t) \). Specifically, we start with:

\[
\begin{aligned}
	&\quad \mathbb{E} \Vert (F_1 y)(t) + (F_2 y)(t) \Vert^2 \leq 9 \mathbb{E} \left\Vert  T(t - t_k) \prod_{j=k}^1 (\mathcal{I} + D_j)  T(t_j - t_{j-1}) y_{0} \right\Vert^2 \\
	& + 9 \mathbb{E} \left\Vert  T(t - t_k) \sum_{i=1}^k \prod_{j=k}^{i+1} (\mathcal{I} + D_j)  T(t_j - t_{j-1}) (\mathcal{I} + D_i) \int_{t_{i-1}}^{t_i}  T(t_i - \tau) Bu(\tau) \, d\tau \right\Vert^2 \\
    & + 9 \mathbb{E} \left\Vert  T(t - t_k) \sum_{i=1}^k \prod_{j=k}^{i+1} (\mathcal{I} + D_j)  T(t_j - t_{j-1}) (\mathcal{I} + D_i) \int_{t_{i-1}}^{t_i}  T(t_i - \tau) g(\tau, y(\tau)) \, d\tau \right\Vert^2 \\
	& + 9 \mathbb{E} \left\Vert  T(t - t_k) \sum_{i=1}^k \prod_{j=k}^{i+1} (\mathcal{I} + D_j)  T(t_j - t_{j-1}) (\mathcal{I} + D_i) \int_{t_{i-1}}^{t_i}  T(t_i - \tau) h(\tau, y(\tau)) \, dW(\tau) \right\Vert^2 \\
	& + 9 \mathbb{E} \left\Vert  T(t - t_k) \sum_{i=2}^k \prod_{j=k}^i (\mathcal{I} + D_j)  T(t_j - t_{j-1}) E_{i-1} v_{i-1} \right\Vert^2 \\
	& + 9 \mathbb{E} \left\Vert  T(t - t_k) E_k v_k \right\Vert^2  + 9 \mathbb{E} \left\Vert \int_{t_k}^t  T(t - \tau) Bu(\tau) \, d\tau \right\Vert^2 \\
	& + 9 \mathbb{E} \left\Vert \int_{t_k}^t  T(t - \tau) g(\tau, y(\tau)) \, d\tau \right\Vert^2  + 9 \mathbb{E} \left\Vert \int_{t_k}^t  T(t - \tau) h(\tau, y(\tau)) \, dW(\tau) \right\Vert^2.
\end{aligned}
\]

Using the triangle inequality, Ito isometry, $\mathscr{L}$- conditions, and the boundedness of the semigroup \(  T(t) \), we get:

\[
\begin{aligned}
	&\quad\mathbb{E} \Vert (F_1 y)(t) + (F_2 y)(t) \Vert^2 \leq 9 M^{2k+2} \prod_{j=1}^k (1 + \|D_j\|)^2 \|y_0\|^2 \\
	& + 9M^4 \|B\|^2 \mathbb{E} \left(\sum_{i=1}^k C_i \int_{t_{i-1}}^{t_i} \|u(\tau)\| \, d\tau \right)^2 + 9M^4 \|B\|^2 \mathbb{E} \left(\sum_{i=1}^k C_i \int_{t_{i-1}}^{t_i} \|g(\tau, y(\tau))\| \, d\tau \right)^2 \\
    & + 9M^4 \|B\|^2 \mathbb{E} \left(\sum_{i=1}^k C_i \int_{t_{i-1}}^{t_i} \|h(\tau, y(\tau))\| \, dW(\tau) \right)^2 \\
     & + 9M^2 \sum_{i=2}^k \prod_{j=i}^k (1 + \|D_j\|)^2 \|E_{i-1}\|^2 \mathbb{E} \|v_{i-1}\|^2 + 9M^2 \|E_k\|^2 \mathbb{E} \|v_k\|^2 \\
	& + 9M^2 \|B\|^2 T \int_0^T \mathbb{E} \|u(\tau)\|^2 \, d\tau + 9M^2 T L_g \int_{t_k}^t (1 + \mathbb{E} \|y(\tau)\|^2) \, d\tau \\
	& + 9M^2  L_h \int_{t_k}^t (1 + \mathbb{E} \|y(\tau)\|^2) \, d\tau,
\end{aligned}
\]

where

\[
C_i = \prod_{j=k}^{i+1} (1 + \|D_j\|) \| T(t_j - t_{j-1})\| (1 + \|D_i\|), \quad N = \sum_{i=1}^k C_i^2.
\]

Using the $C-S$-inequality, Ito isometry, and the Assumptions \ref{assump1} and \ref{assump2}, we have:

\[
\begin{aligned}
	&\quad\mathbb{E} \Vert (F_1 y)(t) + (F_2 y)(t) \Vert^2 \leq 9 M^{2k+2} \prod_{j=1}^k (1 + \|D_j\|)^2 r_0 \\
	& + 9M^4 \|B\|^2 T \sum_{i=1}^k C_i^2 \sum_{i=1}^k \int_{t_{i-1}}^{t_i} \mathbb{E} \|u(\tau)\|^2 \, d\tau\\
	& + 9M^4 \|B\|^2 T \sum_{i=1}^k C_i^2 \sum_{i=1}^k \int_{t_{i-1}}^{t_i} \mathbb{E} \|g(\tau, y(\tau))\|^2 \, d\tau \\
	& + 9M^4 \|B\|^2  \sum_{i=1}^k C_i^2 \sum_{i=1}^k \int_{t_{i-1}}^{t_i} \mathbb{E} \|h(\tau, y(\tau))\|^2 \, d\tau \\
	& + 9M^2 \sum_{i=2}^k \prod_{j=i}^k (1 + \|D_j\|)^2 \|E_{i-1}\|^2 \mathbb{E} \|v_{i-1}\|^2 + 9M^2 \|E_k\|^2 \mathbb{E} \|v_k\|^2 \\
    & + 9M^2 \|B\|^2 T \int_0^T \mathbb{E} \|u(\tau)\|^2 \, d\tau + 9M^2  (T^2 L_g + T L_h)(1 + r_0) \\
    &\leq 9 M^{2k+2} \prod_{j=1}^k (1 + \|D_j\|)^2 r_0 + 9M^4 \|B\|^2 T N \int_0^T \mathbb{E} \|u(\tau)\|^2 \, d\tau \\
    & + 9M^4 \|B\|^2  N (TL_g + L_h) \int_0^T (1 + \mathbb{E} \|y(\tau)\|^2) \,d\tau \\
     & + 9M^2 \sum_{i=2}^k \prod_{j=i}^k (1 + \|D_j\|)^2 \|E_{i-1}\|^2 \mathbb{E} \|v_{i-1}\|^2 + 9M^2 \|E_k\|^2 \mathbb{E} \|v_k\|^2 \\
	& + 9M^2 \|B\|^2 T \int_0^T \mathbb{E} \|u(\tau)\|^2 \, d\tau + 9M^2  (T^2 L_g + T L_h)(1 + r_0) 
    \end{aligned}
\]
\[\begin{aligned}
 &\leq 9 M^{2k+2} \prod_{j=1}^k (1 + \|D_j\|)^2 r_0 + 9 (M^4 N + M^2) \|B\|^2 T \int_0^T \mathbb{E} \|u(\tau)\|^2 \, d\tau \\
      & + 9M^4 \|B\|^2 N (T^2 L_g +T L_h)(1 + r_0) + 9M^2  (T^2 L_g + T L_h)(1 + r_0) \\
	& + 9M^2 \sum_{i=2}^k \prod_{j=i}^k (1 + \|D_j\|)^2 \|E_{i-1}\|^2 \mathbb{E} \|v_{i-1}\|^2 + 9M^2 \|E_k\|^2 \mathbb{E} \|v_k\|^2\\
	&=9 \mathcal{K}_0 r_0 + 9 \left[ \mathcal{K}_1 + \mathcal{K}_2 \int_0^T \mathbb{E} \|u(\tau)\|^2 \, d\tau \right] \leq r_0,
\end{aligned}
\]

where

\[
\begin{aligned}
	\mathcal{K}_0 &= M^{2k+2} \prod_{j=1}^k (1 + \|D_j\|)^2 + (M^4 + M^2) \|B\|^2 N (T^2 L_g + T L_h),\\
    \mathcal{K}_1 &= M^2 \sum_{i=2}^k \prod_{j=i}^k (1 + \|D_j\|)^2 \|E_{i-1}\|^2 \mathbb{E} \|v_{i-1}\|^2 + M^2 \|E_k\|^2 \mathbb{E} \|v_k\|^2
    + (M^4 + M^2) \|B\|^2  N (T^2 L_g + TL_h), \\
	\mathcal{K}_2 &= (M^4 N + M^2) \|B\|^2 T.
\end{aligned}
\]
\bigskip

Consequently, $F_{1}+F_{2}$ maps $\mathcal{B}_{r_{0}}$ to $\mathcal{B}_{r_{0}}$.
\bigskip

 \textbf{Step 2: } To show that \(F_1\) is a contraction mapping on the set \(\mathcal{B}_r\), we need to prove that there exists a constant \(0 < k < 1\) s. t. for all \(y,z\in \mathcal{B}_r\),
\[
\|F_1 y - F_1 z\|_{PC} \leq k \|y - z\|_{PC}.
\]

Let \(y, z \in \mathcal{B}_r\). We will estimate \(\|F_1 y - F_1 z\|_{PC}\) for \(t_0 < t \leq t_1\) and \(t_k < t \leq t_{k+1}\).

 For \(t_0 < t \leq t_1\):
\[
\mathbb{E}\| (F_1 y)(t) - (F_1 z)(t) \|^{2} = \mathbb{E} \|  T(t) (y(0) - z(0)) \|^{2}.
\]
Using the properties of the \(C_0\)-semigroup \( T(t)\):
\[
\mathbb{E}\|  T(t) (y(0) - z(0)) \| \leq M^{2} \mathbb{E}\| y(0) - z(0) \|.
\]

For \(t_k < t \leq t_{k+1}\), \(k \geq 1\):
\begin{align*}
	&\quad\mathbb{E}\| (F_1 y)(t) - (F_1 z)(t) \|^{2} \leq 3\mathbb{E}\bigg\|  T(t-t_k) \prod_{j=k}^1 (\mathcal{I} + D_j)  T(t_j - t_{j-1}) (y_{0} - z_0) \bigg\|^{2} \\
	& + 3\mathbb{E}\bigg\|  T(t-t_k) \sum_{i=1}^k \prod_{j=k}^{i+1} (\mathcal{I} + D_j)  T(t_j - t_{j-1}) (\mathcal{I} + D_i) \int_{t_{i-1}}^{t_i}  T(t_i - \tau) (g(\tau, y(\tau)) - g(\tau, z(\tau))) \, d\tau \bigg\|^{2} \\
	&+ 3\mathbb{E}\bigg\|  T(t-t_k) \sum_{i=1}^k \prod_{j=k}^{i+1} (\mathcal{I} + D_j)  T(t_j - t_{j-1}) (\mathcal{I} + D_i) \int_{t_{i-1}}^{t_i}  T(t_i - \tau) (h(\tau, y(\tau)) - h(\tau, z(\tau))) dW(\tau) \bigg\|^{2}.
\end{align*}

Using the properties of the \(C_0\)-semigroup \( T(t)\), the boundedness of operators \(D_j\), and assumptions on \(g\) and \(h\):
\begin{align*}
	\mathbb{E}\Big\|  T(t-t_k) \prod_{j=k}^1 (\mathcal{I} + D_j)  T(t_j - t_{j-1}) (y_{0} - z_0) \Big\|^2\leq& M^{2k+2} \prod_{j=1}^k (1 + \|D_j\|)^2 \mathbb{E}\|y_0 - z_0\|^{2} .
\end{align*}

Since \( y, z \in \mathcal{B}_r \):
\[
\mathbb{E}\|y_0 - z_0\|^{2} \leq \|y - z\|_{PC}^2.
\]
Thus,
\[
\mathbb{E}\Bigg\|  T(t-t_k) \prod_{j=k}^1 (\mathcal{I} + D_j)  T(t_j - t_{j-1}) (y_{0} - z_0) \Bigg\|^{2} \leq M^{2k+2} \prod_{j=1}^k (1 + \|D_j\|)^2 \|y - z\|_{PC}^2.
\]

 For the second term, using the properties of \( T(t)\) and \(D_j\), and the $\mathscr{L}-$ continuity of \(g\):

\[\begin{aligned}
&\quad	\mathbb{E}\Bigg\|  T(t-t_k) \sum_{i=1}^k \prod_{j=k}^{i+1} (\mathcal{I} + D_j)  T(t_j - t_{j-1}) (\mathcal{I} + D_i) \int_{t_{i-1}}^{t_i}  T(t_i - \tau) (g(\tau, y(\tau)) - g(\tau, z(\tau))) \, d\tau \Bigg\|^{2} \\
	&\leq M^{4} \mathbb{E} \Bigg( \sum_{i=1}^k \prod_{j=i+1}^k (1 + \|D_j\|)\Vert  T(t_j - t_{j-1}) \Vert (1 + \|D_i\|) \int_{t_{i-1}}^{t_i} \|g(\tau, y(\tau)) - g(\tau, z(\tau))\| \, d\tau \Bigg)^{2} \\
&\leq M^{4} \mathbb{E} \Bigg( \sum_{i=1}^k C_{i}\int_{t_{i-1}}^{t_i} \|g(\tau, y(\tau)) - g(\tau, z(\tau))\| \, d\tau \Bigg)^{2} \\
&\leq M^{4}T N \sum_{i=1}^k \int_{t_{i-1}}^{t_i} \mathbb{E}\|g(\tau, y(\tau)) - g(\tau, z(\tau))\|^{2} \, d\tau  \\
&\leq M^{4}T NL_{g} \sum_{i=1}^k \int_{t_{i-1}}^{t_i} \mathbb{E}\| y(\tau) -  z(\tau)\|^{2} \, d\tau \leq M^{4}T^{2} NL_{g} \|y - z\|_{PC}^2.
	\end{aligned}\]

 For the third term, using the properties of \( T(t)\) and \(D_j\), and the  $\mathscr{L}-$ continuity of \(h\):
\begin{align*}
	&	\quad\mathbb{E}\Bigg\|  T(t-t_k) \sum_{i=1}^k \prod_{j=k}^{i+1} (\mathcal{I} + D_j)  T(t_j - t_{j-1}) (\mathcal{I} + D_i) \int_{t_{i-1}}^{t_i}  T(t_i - \tau) (h(\tau, y(\tau)) - h(\tau, z(\tau))) \, dW(\tau) \Bigg\|^{2} \\
	&\leq M^{4} \mathbb{E} \Bigg( \sum_{i=1}^k \prod_{j=i+1}^k (1 + \|D_j\|)\Vert  T(t_j - t_{j-1}) \Vert (1 + \|D_i\|) \int_{t_{i-1}}^{t_i} \|h(\tau, y(\tau)) - h(\tau, z(\tau))\| \, dW(\tau) \Bigg)^{2} \\
	&\leq M^{4} \mathbb{E} \Bigg( \sum_{i=1}^k C_{i}\int_{t_{i-1}}^{t_i} \|h(\tau, y(\tau)) - h(\tau, z(\tau))\| dW(\tau) \Bigg)^{2} \\
	&\leq M^{4} N \sum_{i=1}^k \int_{t_{i-1}}^{t_i} \mathbb{E}\|h(\tau, y(\tau)) - h(\tau, z(\tau))\|^{2} \, d\tau  \\
	&\leq M^{4} NL_{h} \sum_{i=1}^k \int_{t_{i-1}}^{t_i} \mathbb{E}\| y(\tau) -  z(\tau)\|^{2} \, d\tau \leq M^{4}T NL_{h} \|y - z\|_{PC}^2.
\end{align*}

Combining all terms, we get:

\begin{align*}
&\mathbb{E}\| F_1 y - F_1 z \|^2 \leq \Big(3M^{2k+2} \prod_{j=1}^k (1 + \|D_j\|)^2+3 M^{4} N(T^{2} L_{g} +TL_{h})\Big) \|y - z\|_{PC}^2.
\end{align*}

To show that \(F_1\) is a contraction, we need the right-hand side to be less than \(\|y - z\|_{PC}^2\). Hence, we require:

\begin{align*}
3M^{2k+2} \prod_{j=1}^k (1 + \|D_j\|)^2+3 M^{4} N(T^{2}L_{g}+T L_{h}) < 1.
\end{align*}

Given the boundedness conditions on \(D_j\), \(g\), and \(h\), and assuming that \(T\) is sufficiently small, there exists a constant \(k\) s. t. :

\[
\|F_1 y - F_1 z\|_{PC} \leq k \|y - z\|_{PC},
\]

where \(0 < k < 1\).

This completes the proof that \(F_1\) is a contraction mapping on \(\mathcal{B}_{r_{0}}\).
\bigskip

\textbf{Step 3.}The operator \( F_2 \) is completely continuous. To establish this, we first demonstrate that \( F_2 \) is continuous on \( B_{r_0} \). Suppose \( y_m \to y \) in \( B_{r_0} \). Then, it follows that:
\[
g(t, y_m(t)) \to g(t, y(t)) \, \text{and} \, h(t, y_m(t)) \to h(t, y(t)), \, \text{as} \, m \to \infty.
\]
 Moreover, for $t_{0}\leq t\leq t_{1}$, by Lebesgue dominated convergence theorem (LDCT), we can get

 \begin{align*}
 \mathbb{E}\Big\Vert \int_{0}^{t} T(t-\tau) \big(g(\tau, y_{m}(\tau))-g(\tau,y(\tau))\big) \, d\tau\Big\Vert^{2}\leq M^{2}T \int_{0}^{t}\mathbb{E}\Vert g(\tau, y_{m}(\tau))-g(\tau,y(\tau))\Vert^{2}\, d\tau\to 0, \, as\, m\to \infty.
 \end{align*}
On the other hand, using the Itô isometry property for stochastic integrals, and the properties of the \(C_0\)-semigroup \( T(t)\) and the boundedness assumption on \(h\), we get:

\[
\mathbb{E}\Big\Vert \int_{0}^{t} T(t-\tau) \big(h(\tau, y_{m}(\tau)) - h(\tau, y(\tau))\big) dW(\tau)\Big\Vert^{2} \leq M^{2}  \int_{0}^{t} \mathbb{E}\Vert h(\tau, y_{m}(\tau)) - h(\tau, y(\tau)) \Vert^{2} \, d\tau.
\]

By the LDCT, since \(h(\tau, y_{m}(\tau)) \to h(\tau, y(\tau))\) almost surely and \(\Vert h(\tau, y_{m}(\tau)) - h(\tau, y(\tau)) \Vert^{2}\) is bounded by an integrable function, we get:

\[
M^{2}  \int_{0}^{t} \mathbb{E}\Vert h(\tau, y_{m}(\tau)) - h(\tau, y(\tau)) \Vert^{2} \, d\tau \to 0 \quad \text{as} \quad m \to \infty.
\]

Therefore,

\[
\mathbb{E}\Big\Vert \int_{0}^{t} T(t-\tau) \big(h(\tau, y_{m}(\tau)) - h(\tau, y(\tau))\big) dW(\tau)\Big\Vert^{2} \to 0 \quad \text{as} \quad m \to \infty.
\]

Combining the results for the deterministic and stochastic parts, we obtain:

\begin{align*}
\mathbb{E}\Big\Vert F_{2} (y_{m}) - F_{2} (y) \Big\Vert^{2}\leq &2\mathbb{E}\Big\Vert \int_{0}^{t} T(t-\tau) \big(g(\tau, y_{m}(\tau)) - g(\tau, y(\tau))\big) \, d\tau\Big\Vert^{2}\\
+&2\mathbb{E}\Big\Vert \int_{0}^{t} T(t-\tau) \big(h(\tau, y_{m}(\tau)) - h(\tau, y(\tau))\big) dW(\tau)\Big\Vert^{2}\\
& \to 0 \quad \text{as} \quad m \to \infty.
\end{align*}
For \(t_k < t \leq t_{k+1}\) with \(k \geq 1\), the process is analogous to that for \(t_0 < t \leq t_1\). Thus, it follows that \(F_2\) is continuous on \(B_{r_0}\).

Secondly, we prove that for any \(t \in [0, T]\), \(\mathscr{V}(t) = \{F_{2}(y)(t) \mid y \in \mathcal{B}_{r_{0}}\}\) is relatively compact in \(H\). It is obvious that \(\mathscr{V}(0)\) is relatively compact in \(H\). Let \(0 < t \leq T\) be given. For any \(\varepsilon \in (0, t)\), define an operator \(F^{\varepsilon}\) on \(\mathcal{B}_{r_{0}}\) by

 \[
 (F^{\varepsilon} y)(t) =
 \begin{cases}
 	\int_{0}^{t-\varepsilon}  T(t-\tau) \big(Bu(\tau) + g(\tau, y(\tau))\big)  \, d\tau
 	+ \int_{0}^{t-\varepsilon}  T(t-\tau)h(\tau, y(\tau))\, dW(\tau)\\
 	=T(\varepsilon )\int_{0}^{t-\varepsilon}  T(t-\tau-\varepsilon) \big(Bu(\tau) + g(\tau, y(\tau))\big)\, d\tau\\
 	+T(\varepsilon) \int_{0}^{t-\varepsilon}  T(t-\tau-\varepsilon)h(\tau, y(\tau)) \, dW(\tau), \, \text{if} \; t_0 < t \leq t_1, \\
 	\\
 	\int_{t_k}^{t-\varepsilon}  T(t-\tau) \big(Bu(\tau) + g(\tau, y(\tau))\big)  \, d\tau
 	+ \int_{t_k}^{t-\varepsilon}  T(t-\tau)h(\tau, y(\tau)) \, dW(\tau)\\
 	=T(\varepsilon )\int_{t_k}^{t-\varepsilon}  T(t-\tau-\varepsilon) \big(Bu(\tau) + g(\tau, y(\tau))\big)\, d\tau\\
 	+T(\varepsilon) \int_{t_k}^{t-\varepsilon}  T(t-\tau-\varepsilon)h(\tau, y(\tau)) \, dW(\tau), \, \text{if} \; t_k < t \leq t_{k+1}, \; k \geq 1.
 \end{cases}
 \]
 Then the set $\{(F^{\varepsilon})(t): y\in \mathcal{B}_{r}\}$ is relatively compact in $H$ because $T(\varepsilon)$ is compact. This compactness helps us establish the desired continuity properties. Now, let's consider the case for \( t_0 < t \leq t_1 \) :

\begin{align*}
	\mathbb{E}\Vert (F_{2}y)(t)-(F^{\varepsilon}y)(t)\Vert^{2}&\leq3\mathbb{E}\bigg\Vert \int_{t-\varepsilon}^{t}  T(t-\tau) Bu(\tau)\, d\tau\bigg\Vert^{2} \\
	&+3\mathbb{E}\bigg\Vert \int_{t-\varepsilon}^{t}  T(t-\tau) g(\tau, y(\tau))\, d\tau\bigg\Vert^{2} \\
 &+3\mathbb{E}\bigg\Vert \int_{t-\varepsilon}^{t}  T(t-\tau) h(\tau, y(\tau)) \, dW(\tau) \bigg\Vert^{2}.
\end{align*}
To estimate the deterministic component involving \(Bu(\tau)\), we apply the triangle inequality followed by the $C-S$-inequality . This yields:

\[
\mathbb{E}\left\Vert \int_{t-\varepsilon}^{t}  T(t-\tau) Bu(\tau) \, d\tau \right\Vert^2
\leq M^2 \|B\|^2 \varepsilon \int_0^T \mathbb{E} \|u(\tau)\|^2 \, d\tau.
\]
Using Assumption \(\ref{assump1}\), we have

\[
\begin{aligned}
	\mathbb{E}\bigg\Vert \int_{t-\varepsilon}^{t}  T(t-\tau) g(\tau, y(\tau)) \, d\tau \bigg\Vert^{2}
	&\leq \mathbb{E}\bigg( \int_{t-\varepsilon}^{t} \|  T(t-\tau) g(\tau, y(\tau)) \| \, d\tau \bigg)^{2} \\
	&\leq M^{2}  \mathbb{E}\left( \int_{t-\varepsilon}^{t}  \| g(\tau, y(\tau)) \| \, d\tau \right)^{2} \\
	&\leq M^{2}\varepsilon  \int_{t-\varepsilon}^{t}  \mathbb{E} \| g(\tau, y(\tau)) \| ^{2} \, d\tau .
	\end{aligned}
\]

Since \( x \in \mathcal{B}_{r_0} \) and \(\| y(\tau) \|^2 \leq r_0\), and applying the  $\mathscr{L}-$ condition, we have:

\[
\begin{aligned}
	\mathbb{E}\left\Vert \int_{t-\varepsilon}^{t}  T(t-\tau) g(\tau, y(\tau)) \, d\tau \right\Vert^2
	&\leq M^2 \varepsilon \int_{t-\varepsilon}^{t} \mathbb{E} \| g(\tau, y(\tau)) \|^2 \, d\tau \\
	&\leq M^2 L_g \varepsilon \int_{t-\varepsilon}^{t} \left(1 + \mathbb{E} \| y(\tau) \|^2 \right) \, d\tau \\
	&\leq M^2 L_g \left(1 + r_0 \right) \varepsilon^2.
\end{aligned}
\]
Given that \( x \in \mathcal{B}_{r_0} \) and \(\| y(\tau) \|^2 \leq r_0\), and applying the  $\mathscr{L}-$ condition, we use the Itô isometry to estimate:

\[
\begin{aligned}
	\mathbb{E}\left\Vert \int_{t-\varepsilon}^{t}  T(t-\tau) h(\tau, y(\tau)) \, dW(\tau) \right\Vert^2
	&\leq \mathbb{E}\left( \int_{t-\varepsilon}^{t} \|  T(t-\tau) h(\tau, y(\tau)) \| \, dW(\tau) \right)^2 \\
	&\leq M^2  \int_{t-\varepsilon}^{t} \mathbb{E}\| h(\tau, y(\tau)) \|^2 \, d\tau \\
	&\leq M^2 L_h  \int_{t-\varepsilon}^{t} (1 + \mathbb{E}\| y(\tau) \|^2) \, d\tau  \\
	&\leq M^2 L_h \varepsilon \left( 1 + r_0 \right).
\end{aligned}
\]

Combining all terms, we get:

\[
\mathbb{E}\Vert (F_{2}y)(t)-(F^{\varepsilon}y)(t)\Vert^{2}
\leq 3M^2 L_g \left(1 + r_0 \right) \varepsilon^2+3 M^2 L_h \varepsilon \left( 1 + r_0 \right)+3M^2 \|B\|^2 \varepsilon \int_0^T \mathbb{E} \|u(\tau)\|^2 \, d\tau.
\]

As \(\varepsilon \to 0\):

\[
\mathbb{E}\Vert (F_{2}y)(t)-(F^{\varepsilon}y)(t)\Vert^{2} \to 0.
\]

For \(t_k < t \leq t_{k+1}\), \(k \geq 1\), the definition of \(F_2\) and \(F^{\varepsilon}\) allows us to obtain analogous results as discussed above.

Thus, since \(F_{2}x\) can be approximated by \(F^{\varepsilon}x\) arbitrarily closely in the mean square norm and \(F^{\varepsilon}y\) is relatively compact in \(H\), it follows that \(\mathscr{V}(t) = \{F_{2}(y)(t) \mid y \in \mathcal{B}_{r_{0}}\}\) is relatively compact in \(H\).

Finally, we demonstrate that \(F_{2}(B_{r_{0}})\) is equicontinuous on \([0,T]\). According to the definition of the \( F_{2} \) operator, demonstrating one case is sufficient as the other follows analogously.

For any $y\in \mathcal{B}_{r_{0}}$ and \( t_0 \leq a<b \leq t_1 \), we have

\[\begin{aligned}
	\mathbb{E}\Vert (F_{2}y)(b)-(F_{2}y)(a)\Vert^{2}&\leq 4\mathbb{E}\bigg\Vert\int_{0}^{a} T(b-a)(Bu(\tau)+g(\tau,y(\tau)))\, d\tau\bigg\Vert^{2}\\
    &+4\mathbb{E}\bigg\Vert\int_{a}^{b} T(b-\tau)(Bu(\tau)+g(\tau,y(\tau)))\, d\tau\bigg\Vert^{2}\\
&+4\mathbb{E}\bigg\Vert\int_{0}^{a} T(b-a)h(\tau,y(\tau))\, dW(\tau)\bigg\Vert^{2}\\
&+4\mathbb{E}\bigg\Vert\int_{a}^{b} T(b-\tau)h(\tau,y(\tau))\, dW(\tau)\bigg\Vert^{2}\\
&=\mathcal{J}_{1}+\mathcal{J}_{2}+\mathcal{J}_{3}+\mathcal{J}_{4}.
\end{aligned}\]
To prove \( \mathbb{E}\Vert (F_{2}y)(b)-(F_{2}y)(a)\Vert^{2}\to 0 \) as \( b - a \to 0 \), it suffices to demonstrate that \( \mathcal{J}_{i} \to 0 \) independently of \( y \in \mathcal{B}_{r_{0}} \) as \( b - a \to 0 \), for \( i = 1, 2, 3, 4 \).

Further, for $\mathcal{J}_{1}$ and $\mathcal{J}_{3}$, if $a=0, 0<b<t_{1}$, it is easy to see $\mathcal{J}_{1}=\mathcal{J}_{3}=0$, so for $a>0$ and $0<\varepsilon <a$ small enough, we have that

\[\begin{aligned}
	\mathcal{J}_{1}&\leq 8\mathbb{E} \bigg\Vert\int_{0}^{a-\varepsilon} T(b-a)(Bu(\tau)+g(\tau,y(\tau)))\, d\tau\bigg\Vert^{2}\\
	&+8\mathbb{E} \bigg\Vert\int_{a-\varepsilon}^{a} T(b-a)(Bu(\tau)+g(\tau,y(\tau)))\, d\tau\bigg\Vert^{2}\\
	&\leq 8M^{2}(a-\varepsilon)\int_{0}^{a-\varepsilon}\mathbb{E}\Vert(Bu(\tau)+g(\tau,y(\tau)))\Vert^{2} \, d\tau\\
	&+8M^{2}\varepsilon\int_{a-\varepsilon}^{a}\mathbb{E}\Vert(Bu(\tau)+g(\tau,y(\tau)))\Vert^{2} \,d\tau\\
	&\leq 8M^{2}(a-\varepsilon)^{2}\bigg(2TL_g (1+r_{0})+2\Vert B\Vert^{2}\int_{0}^{T}\mathbb{E}\Vert u\Vert^{2}\, d\tau \bigg)\\
	&+8M^{2}\varepsilon^{2}\bigg(2TL_g (1+r_{0})+2\Vert B\Vert^{2}\int_{0}^{T}\mathbb{E}\Vert u\Vert^{2}\, d\tau \bigg)\\
	&\to 0 \quad as\quad  b-a\to 0 \quad \varepsilon\to 0,
\end{aligned}\]

\begin{align*}
	\mathcal{J}_{3} &\leq 4 \mathbb{E} \int_{0}^{a} \Vert  T(b-a)h(\tau,y(\tau)) \Vert^{2} \, d\tau\leq 4M^{2}  \int_{0}^{a} \mathbb{E} \Vert h(\tau,y(\tau)) \Vert^{2}  \, d\tau \\
	  &\leq 4M^{2}L_{h}(1+r_{0})a\to 0 \quad as\quad  b-a\to 0.
\end{align*}
For $\mathcal{J}_{2}$ and $\mathcal{J}_{4}$, we obtain by Assumptions \ref{assump0}, \ref{assump1} and \ref{assump2}, Lemma \ref{lemm2}  that
\begin{align*}
	\mathcal{J}_{2}&\leq 4\mathbb{E}\bigg(\int_{a}^{b}\Vert  T(b-\tau)(Bu(\tau)+g(\tau,y(\tau)))\Vert \, d\tau\bigg)^{2}\\
	&\leq 4M^{2}\mathbb{E}\bigg(\int_{a}^{b}\Vert(Bu(\tau)+g(\tau,y(\tau)))\Vert \, d\tau\bigg)^{2}\\
		&\leq 4M^{2}(b-a)\int_{a}^{b}\mathbb{E}\Vert(Bu(\tau)+g(\tau,y(\tau)))\Vert^{2} \, d\tau\\
		&\leq 4M^{2}(b-a)\bigg(2TL_g (1+r_{0})+2\Vert B\Vert^{2}\int_{0}^{T}\mathbb{E}\Vert u\Vert^{2}\, d\tau \bigg)\\
		&\to 0 \quad as\quad b-a\to 0,
\end{align*}
\begin{align*}
	\mathcal{J}_{4}&\leq4\mathbb{E}\int_{a}^{b}\Vert  T(b-\tau)h(\tau,y(\tau))\Vert^{2} \, d\tau\leq 4M^{2}(1+r_{0})(b-a)\to 0, \quad as\quad b-a\to 0.
\end{align*}

This suggests that \( F_{2}(B_{r_{0}}) \) displays equicontinuity. Consequently, according to the Arzela-Ascoli theorem, \( F_{2} \) qualifies as a completely continuous operator. Hence, by Lemma \( \ref{Kras} \), the operator \( F_{1} + F_{2} \) possesses at least one fixed point \( y \in \mathcal{B}_{r_{0}} \), which coincides with the mild solution of system \( \eqref{eq1} \).
\end{proof}

\begin{theorem}\label{ty}
	If the Assumptions  \ref{assump1} and \ref{assump2} are satisfied, then the impulsive stochastic system \eqref{eq1} possesses a unique mild solution within \( PC([0,T],L^{2}(\Omega,H)) \) given that \eqref{oi} and the condition
	\begin{align}\label{tt}
		k = \max\{k_{1}, k_{2}\} < 1,
	\end{align}
	hold true, where \( k_{1} \) and \( k_{2} \) are defined as
	\begin{align*}
		k_{1} = 2M^{2}T^{2}(\tilde{L}_{g}+\tilde{L}_{h}), \quad
		k_{2} = 4M^{4}(N+T^{2})(\tilde{L}_{g}	+\tilde{L}_{h}).
	\end{align*}
\end{theorem}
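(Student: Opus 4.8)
The plan is to apply the Banach contraction mapping principle to the full operator $F = F_1 + F_2$ on the closed ball $\mathcal{B}_{r_0}$, rather than Krasnoselskii's theorem. Since $\mathcal{B}_{r_0}$ is a closed (hence complete) subset of the Banach space $PC([0,T],L^2(\Omega,H))$, and since condition \eqref{oi} already guarantees, via Step 1 of the previous proof, that $F_1+F_2$ maps $\mathcal{B}_{r_0}$ into itself, it remains only to show that $F$ is a contraction with modulus $k=\max\{k_1,k_2\}<1$. A unique fixed point then follows immediately, and this fixed point is precisely the unique mild solution of \eqref{eq1}.

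To establish the contraction I would estimate $\mathbb{E}\|(Fy)(t)-(Fz)(t)\|^2$ for $y,z\in\mathcal{B}_{r_0}$ on the two types of subintervals. On $t_0<t\le t_1$ the terms $T(t)y(0)$ and $\int_0^t T(t-\tau)Bu(\tau)\,d\tau$ are independent of the state and cancel in the difference, leaving only the $g$- and $h$-integrals. Splitting by $(a+b)^2\le 2a^2+2b^2$, applying the Cauchy--Schwarz inequality to the drift integral and the It\^o isometry (Lemma \ref{lemm2} with $p=1$) to the stochastic integral, and then invoking the local Lipschitz bounds of Assumptions \ref{assump1}(ii) and \ref{assump2}(ii), which are available because $\|y\|^2,\|z\|^2\le r_0$, I obtain a bound of the form $(2M^2T^2\tilde L_g+2M^2T\tilde L_h)\|y-z\|_{PC}^2$, dominated by $k_1\|y-z\|_{PC}^2$.

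On $t_k<t\le t_{k+1}$ with $k\ge 1$, the difference $(Fy)(t)-(Fz)(t)$ again loses its $y_0$-, $Bu$- and impulse ($E_iv_i$) contributions, and what survives is four state-dependent pieces: the impulsive $g$- and $h$-sums coming from $F_1$ and the two integral terms from $F_2$. Using $(\sum)^2\le 4\sum(\cdot)^2$, I would bound each piece exactly as in Step 2 of the existence proof: the semigroup bound $M$, the operator-product constants $C_i$ with $N=\sum_i C_i^2$, Cauchy--Schwarz across the finite sum (producing the factor $N$), the It\^o isometry on the stochastic terms, and the Lipschitz constants $\tilde L_g,\tilde L_h$. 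Collecting the $F_1$-contributions, which carry $N$, and the $F_2$-contributions, which carry $T^2$, yields a bound of the form $4M^4(N+T^2)(\tilde L_g+\tilde L_h)\|y-z\|_{PC}^2=k_2\|y-z\|_{PC}^2$. Taking the supremum over $t\in[0,T]$ gives $\|Fy-Fz\|_{PC}\le\sqrt{k}\,\|y-z\|_{PC}$ with $k=\max\{k_1,k_2\}$, so \eqref{tt} makes $F$ a contraction.

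The main subtlety is that Assumptions \ref{assump1}(ii) and \ref{assump2}(ii) provide only local Lipschitz estimates valid on $\{\|y\|^2\le r\}$, so the entire argument must stay inside $\mathcal{B}_{r_0}$; this requires the radius to be compatible with $r$ and relies crucially on the self-mapping property secured by \eqref{oi}. Consequently the contraction delivers uniqueness within $\mathcal{B}_{r_0}$, and to upgrade this to uniqueness in all of $PC([0,T],L^2(\Omega,H))$ one observes that any mild solution, being constructed from the same bounded data, is forced into the invariant ball, so two potential solutions must coincide there. I expect the bookkeeping of the four impulsive and integral pieces on the second subinterval, matching the accumulated constants precisely to $k_2$, to be the most laborious step.
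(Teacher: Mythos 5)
Your proposal follows essentially the same route as the paper: its own proof of Theorem \ref{ty} likewise applies the Banach contraction principle to $F=F_{1}+F_{2}$ on $\mathcal{B}_{r_{0}}$, invoking Step 1 of the existence theorem (under \eqref{oi}) for the self-mapping property, splitting the difference into the same state-dependent pieces (a factor $2$ on $[t_{0},t_{1}]$ and a factor $4$ with the four terms $\mathcal{J}_{1},\dots,\mathcal{J}_{4}$ on the later subintervals), and arriving at exactly the constants $k_{1}$ and $k_{2}$. The estimates and conclusion match; your closing remark about upgrading uniqueness from $\mathcal{B}_{r_{0}}$ to all of $PC([0,T],L^{2}(\Omega,H))$ addresses a point the paper itself glosses over.
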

\begin{proof}
	Consider the mapping \( F: PC([0,T],L^{2}(\Omega,H)) \to PC([0,T],L^{2}(\Omega,H)) \) defined by
	\[
	(Fy)(t) = (F_1 y)(t) + (F_2 y)(t), \quad t \in [0,T] \setminus \{t_1, \dots, t_n\}.
	\]
	It is evident that the mild solution of the system \eqref{eq1} is equivalent to a fixed point of the operator \( F \). According to Step 1 of Theorem 1, we know that \( F(\mathcal{B}_{r_0}) \subseteq \mathcal{B}_{r_0} \). For any $y_{1},y_{2}\in\mathcal{B}_{r_{0}}$ and $t\in[t_{0},t_{1}]$, we have
	\[\begin{aligned}
		\mathbb{E}\Vert (Fy_{2})(t)-(Fy_{1})(t)\Vert^{2}&\leq 2\mathbb{E}\bigg\Vert \int_{0}^{t} T(t-\tau)(g(\tau,y_{2}(\tau))-g(\tau,y_{1}(\tau)))\, d\tau\bigg\Vert^{2}\\
		&+2\mathbb{E}\bigg\Vert \int_{0}^{t} T(t-\tau)(h(\tau,y_{2}(\tau))-h(\tau,y_{1}(\tau)))\, dW(\tau)\bigg\Vert^{2}\\
		&\leq 2M^{2}T \int_{0}^{t}\mathbb{E}\Vert g(\tau,y_{2}(\tau))-g(\tau,y_{1}(\tau))\Vert^{2}\, d\tau\\
        &+2M^{2}T \int_{0}^{t} \mathbb{E}\Vert h(\tau,y_{2}(\tau))-h(\tau,y_{1}(\tau))\Vert^{2}\, d\tau\\
		&\leq 2M^{2}T\tilde{L}_{g} \int_{0}^{t}\mathbb{E}\Vert y_{2}(\tau)-y_{1}(\tau)\Vert^{2}\, d\tau\\
	&	+2M^{2}T\tilde{L}_{h}  \int_{0}^{t} \mathbb{E}\Vert y_{2}(\tau)-y_{1}(\tau)\Vert^{2}\, d\tau\\
		&\leq 2M^{2}T^{2}(\tilde{L}_{g}+\tilde{L}_{h})\Vert y_{2}-y_{1}\Vert^{2}_{PC}\\
	&	=k_{1}\Vert y_{2}-y_{1}\Vert^{2}_{PC}.
	\end{aligned} \]
 For any $y_{1},y_{2}\in\mathcal{B}_{r_{0}}$ and \(t_k < t \leq t_{k+1}\), \(k \geq 1\), we have

 \begin{align*}
 	&\quad\quad \mathbb{E}\Vert (Fy_{2})(t)-(Fy_{1})(t)\Vert^{2}\\
 	&\leq 4\mathbb{E}\bigg\Vert  T(t-t_{k})\sum_{i=1}^{k}\prod_{j=k}^{i+1}(\mathcal{I}+D_{j}) T(t_{j}-t_{j-1})
 	(\mathcal{I}+D_{i})\int_{t_{i-1}}^{t_{i}} T(t_{i}-\tau)(g(\tau,y_{2}(\tau))-g(\tau,y_{1}(\tau)))\, d\tau\bigg\Vert^{2}\\
 	&+4\mathbb{E}\bigg\Vert  T(t-t_{k})\sum_{i=1}^{k}\prod_{j=k}^{i+1}(\mathcal{I}+D_{j}) T(t_{j}-t_{j-1})
 	(\mathcal{I}+D_{i})\int_{t_{i-1}}^{t_{i}} T(t_{i}-\tau)(h(\tau,y_{2}(\tau))-h(\tau,y_{1}(\tau)))\, dW(\tau)\bigg\Vert^{2}\\
 	&+4\mathbb{E}\bigg\Vert\int_{t_{k}}^{t} T(t-\tau)(g(\tau,y_{2}(\tau))-g(\tau,y_{1}(\tau)))\, d\tau\bigg\Vert^{2}+4\mathbb{E}\bigg\Vert \int_{t_{k}}^{t} T(t-\tau)(h(\tau,y_{2}(\tau))-h(\tau,y_{1}(\tau)))\, d\tau\bigg\Vert^{2}\\
&=\mathcal{J}_{1}+\mathcal{J}_{2}+\mathcal{J}_{3}+\mathcal{J}_{4}.
 \end{align*}
By applying the C-S inequality and utilizing the  $\mathscr{L}-$ condition, we can derive a following  bound for \(\mathcal{J}_{1}\):
\[
\begin{aligned}
	\mathcal{J}_{1}& =4\mathbb{E}\bigg\Vert  T(t-t_{k})\sum_{i=1}^{k}\prod_{j=k}^{i+1}(\mathcal{I}+D_{j}) T(t_{j}-t_{j-1})
	(\mathcal{I}+D_{i})\int_{t_{i-1}}^{t_{i}} T(t_{i}-\tau)(g(\tau,y_{2}(\tau))-g(\tau,y_{1}(\tau)))\, d\tau\bigg\Vert^{2}\\
    &\leq  4M^{4}\mathbb{E}\bigg(\sum_{i=1}^{k}\prod_{j=k}^{i+1}(1+\Vert D_{j}\Vert)\Vert  T(t_{j}-t_{j-1})\Vert
	(1+\Vert D_{i}\Vert)\int_{t_{i-1}}^{t_{i}}\Vert g(\tau,y_{2}(\tau))-g(\tau,y_{1}(\tau))\Vert \, d\tau\bigg)^{2}\\
    &\leq  4M^{4}\mathbb{E}\bigg(\sum_{i=1}^{k}C_{i}\int_{t_{i-1}}^{t_{i}}\Vert g(\tau,y_{2}(\tau))-g(\tau,y_{1}(\tau))\Vert \, d\tau\bigg)^{2}\\
		&\leq  4M^{4}\sum_{i=1}^{k}C^{2}_{i}\sum_{i=1}^{k}\int_{t_{i-1}}^{t_{i}}\mathbb{E}\Vert g(\tau,y_{2}(\tau))-g(\tau,y_{1}(\tau))\Vert^{2} \, d\tau\\
			&\leq  4M^{4}N\tilde{L}_{g}\sum_{i=1}^{k}\int_{t_{i-1}}^{t_{i}}\mathbb{E}\Vert y_{2}(\tau)-y_{1}(\tau)\Vert^{2} \, d\tau
				\leq  4M^{4}N\tilde{L}_{g}T\| y_{2}- y_{1} \|^{2}_{PC},
\end{aligned}
\]
where
\begin{align*}
	C_{i}=\prod_{j=k}^{i+1}(1+\Vert D_{j}\Vert)\Vert  T(t_{j}-t_{j-1})\Vert
(1+\Vert D_{i}\Vert),\quad N=\sum_{i=1}^{k}C^{2}_{i}.
\end{align*}

By applying the Itô isometry, C-S inequality and utilizing the  $\mathscr{L}-$ condition, we can derive a following  bound for \(\mathcal{J}_{2}\):
\[
\begin{aligned}
\mathcal{J}_{2}	& =4\mathbb{E}\bigg\Vert  T(t-t_{k})\sum_{i=1}^{k}\prod_{j=k}^{i+1}(\mathcal{I}+D_{j}) T(t_{j}-t_{j-1})
(\mathcal{I}+D_{i})\\
&\times\int_{t_{i-1}}^{t_{i}} T(t_{i}-\tau)(h(\tau,y_{2}(\tau))-h(\tau,y_{1}(\tau)))\, dW(\tau)\bigg\Vert^{2}\\
&\leq  4M^{4}\mathbb{E}\bigg(\sum_{i=1}^{k}\prod_{j=k}^{i+1}(1+\Vert D_{j}\Vert)\Vert  T(t_{j}-t_{j-1})\Vert
(1+\Vert D_{i}\Vert)\\
 &\times\int_{t_{i-1}}^{t_{i}}\Vert h(\tau,y_{2}(\tau))-h(\tau,y_{1}(\tau))\Vert \, dW(\tau)\bigg)^{2}\\
	&\leq  4M^{4}\mathbb{E}\bigg(\sum_{i=1}^{k}C_{i}\int_{t_{i-1}}^{t_{i}}\Vert h(\tau,y_{2}(\tau))-h(\tau,y_{1}(\tau))\Vert \, dW(\tau)\bigg)^{2}
\end{aligned}
\]
\[\begin{aligned}
&\leq  4M^{4}\sum_{i=1}^{k}C^{2}_{i}\sum_{i=1}^{k}\int_{t_{i-1}}^{t_{i}}\mathbb{E}\Vert h(\tau,y_{2}(\tau))-h(\tau,y_{1}(\tau))\Vert^{2} \, d\tau\\
&\leq  4M^{4}N\tilde{L}_{h}\sum_{i=1}^{k}\int_{t_{i-1}}^{t_{i}}\mathbb{E}\Vert y_{2}(\tau)-y_{1}(\tau)\Vert^{2} \, d\tau\\
&\leq  4M^{4}N\tilde{L}_{h}T\| y_{2} - y_{1} \|^{2}_{PC}\\
\end{aligned}
\]

Similarly, by using the appropriate mathematical tools, we obtain the following results for \(\mathcal{J}_{3}\) and \(\mathcal{J}_{4}\):
\[
\begin{aligned}
\mathcal{J}_{3}&=4\mathbb{E}\bigg\Vert\int_{t_{k}}^{t} T(t-\tau)(g(\tau,y_{2}(\tau))-g(\tau,y_{1}(\tau)))\, d\tau\bigg\Vert^{2}\\
	&\leq 4 M^{2} \mathbb{E} \bigg( \int_{t_{k}}^{t} \Vert g(\tau,y_{2}(\tau))-g(\tau,y_{1}(\tau)) \Vert \, d\tau \bigg)^{2} \\
		&\leq 4 M^{2}T  \int_{t_{k}}^{t} \mathbb{E}\Vert g(\tau,y_{2}(\tau))-g(\tau,y_{1}(\tau)) \Vert^{2} \, d\tau\\
	&\leq 4 M^{4} T\tilde{L}_{g} \int_{t_{k}}^{t}  \mathbb{E} \| y_{2}(\tau) - y_{1}(\tau) \|^{2} \, d\tau\\
	&\leq 4 M^{4} T^{2} \tilde{L}_{g} \| y_{2} - y_{1} \|^{2}_{PC},
\end{aligned}
\]
\[
\begin{aligned}	\mathcal{J}_{4}&=4\mathbb{E}\bigg\Vert\int_{t_{k}}^{t} T(t-\tau)(h(\tau,y_{2}(\tau))-h(\tau,y_{1}(\tau)))\, dW(\tau)\bigg\Vert^{2}\\
	&\leq 4 M^{2} \mathbb{E} \bigg( \int_{t_{k}}^{t} \Vert h(\tau,y_{2}(\tau))-h(\tau,y_{1}(\tau)) \Vert \, dW(\tau) \bigg)^{2} \\
	&\leq 4 M^{2}T  \int_{t_{k}}^{t} \mathbb{E}\Vert h(\tau,y_{2}(\tau))-h(\tau,y_{1}(\tau)) \Vert^{2} \, d\tau\\
	&\leq 4 M^{4} T\tilde{L}_{h} \int_{t_{k}}^{t}  \mathbb{E} \| y_{2}(\tau) - y_{1}(\tau) \|^{2} \, d\tau\\
	&\leq 4 M^{4} T^{2} \tilde{L}_{h} \| y_{2} - y_{1} \|^{2}_{PC} .
\end{aligned}
\]

Combining the estimates, we have:

\[
\begin{aligned}
	\mathbb{E}\Vert (Fy_{2})(t)-(Fy_{1})(t)\Vert^{2}& \leq4M^{4}(N+T^{2})(\tilde{L}_{g}	+\tilde{L}_{h})\| y_{2} - y_{1} \|^{2}_{PC}
	=k_{2}\| y_{2} - y_{1} \|^{2}_{PC} .
\end{aligned}
\]
Then, we get
\[
\Vert Fy_{2}-Fy_{1}\Vert^{2}_{PC} \leq k \Vert y_{2} - y_{1} \Vert^{2}_{PC},
\]
where

\begin{align*}
	k=\max\{k_{1},k_{2}\}.
\end{align*}
According to \eqref{tt}, it is established that \( F \) acts as a contraction mapping on \( \mathcal{B}_{r_{0}} \). Consequently, applying the well-established contraction mapping principle confirms that \( F \) possesses a sole fixed point within \( \mathcal{B}_{r_{0}} \). This fixed point \( y \in \mathcal{B}_{r_{0}} \) signifies that \( y(t) \) stands as the unique mild solution to the system described by \eqref{eq1}.
\end{proof}
\section{Existence of optimal controls}

In this section of the manuscript, we delve into the existence of optimal controls for a given control problem. We begin by defining the framework and assumptions necessary for our analysis.

Let \( y^{u} \) represent the mild solution of system \eqref{eq1} associated with the control \( u \in U_{ad} \). Consider the Lagrange problem (P):

Our goal is to find an optimal pair \( (y^{0}, u^{0}) \in PC([0,T],L^{2}(\Omega,H)) \times U_{ad} \) s. t.
\begin{align}
	J(y^{0},u^{0}) \leq J(y^{u},u), \, \forall (y^{u},u) \in PC([0,T],L^{2}(\Omega,H)) \times U_{ad},
\end{align}
where the cost function is defined as
\begin{align*}
	J(y^{u},u) = \mathbb{E}\bigg(\int_{0}^{T} l(t,y^{u}(t),u(t)) \, dt\bigg).
\end{align*}

Suppose the following assumptions hold:

\((A_{1})\) The functional \( l: [0,T] \times H \times U \to \mathbb{R} \cup \{\infty\} \) is \( F_{t} \)-measurable.

\((A_{2})\) For any \( t \in [0,T] \), \( l(t,\cdot,\cdot) \) is sequentially lower semicontinuous on \( H \times U \).

\((A_{3})\) For any \( t \in [0,T] \) and \( x \in H \), \( l(t,y,\cdot) \) is convex on \( U \).

\((A_{4})\) There exist constants \( d_{1} \geq 0 \), \( d_{2} > 0 \), and a nonnegative function \(\xi \in L^{1}([0,T], \mathbb{R})\) s. t.
\begin{align*}
	l(t,y,u) \geq \xi(t) + d_{1} \mathbb{E}\|y\|^{2} + d_{2} \mathbb{E}\|u\|^{2}.
\end{align*}

With these assumptions in place, we are now in a position to demonstrate the existence of optimal controls for problem (P).
\bigskip

\begin{theorem}\label{t3}
	Assume that the hypothesis of Theorem \ref{ty} and assumptions \((A_1)-(A_4)\) are satisfied. Then the Lagrange problem \((P)\) has at least one optimal solution that is, there is an admissible state-control pair
	\begin{align*}
		(y^0, u^0) \in PC([0,T], L^2(\Omega, H)) \times U_{ad},
	\end{align*}
	s. t.
	\begin{align}
		J(y^0, u^0) \leq J(y^u, u), \quad \forall (y^u, u) \in PC([0,T], L^2(\Omega, H)) \times U_{ad}.
	\end{align}
\end{theorem}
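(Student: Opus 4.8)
The plan is to apply the direct method of the calculus of variations (Tonelli's method). Set
\[
m = \inf\{J(y^u,u) : (y^u,u) \in PC([0,T],L^2(\Omega,H)) \times U_{ad}\}.
\]
Assumption $(A_4)$ gives $J(y^u,u) \ge \int_0^T \xi(t)\,dt > -\infty$, since $d_1,d_2 \ge 0$ and $\xi \in L^1([0,T],\mathbb{R})$, so $m$ is finite. If $m = +\infty$ every admissible pair is optimal and there is nothing to prove, so I assume $m < +\infty$ and choose a minimizing sequence $\{(y^n,u^n)\}$, where $y^n = y^{u^n}$ is the (unique, by Theorem \ref{ty}) mild solution associated to $u^n$ and $J(y^n,u^n) \to m$.

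Next I extract a convergent control. From $(A_4)$,
\[
d_2\,\mathbb{E}\int_0^T \|u^n(t)\|^2\,dt \le J(y^n,u^n) - \int_0^T \xi(t)\,dt,
\]
whose right-hand side is bounded, so $\{u^n\}$ is bounded in the Hilbert space $L^2_F([0,T],U)$. Since $Y$, and hence $U_{ad}$, is bounded, closed and convex, $U_{ad}$ is weakly sequentially compact (closed plus convex gives weakly closed via Mazur's lemma); thus there is a subsequence, not relabeled, and $u^0 \in U_{ad}$ with $u^n \rightharpoonup u^0$ weakly in $L^2_F([0,T],U)$.

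The crux is the third step: showing that the associated states converge \emph{strongly}, $y^n \to y^0 := y^{u^0}$ in $PC$. The control enters the mild-solution representation linearly only through terms of the form $\int_{t_k}^{t} T(t-\tau)Bu(\tau)\,d\tau$ and the analogous impulse contributions. Because $T(t)$ is compact for $t>0$ (Assumption \ref{assump0}), the linear map $u \mapsto \int_{t_k}^{\cdot} T(\cdot-\tau)Bu(\tau)\,d\tau$ is completely continuous, so it sends the weakly convergent sequence $u^n \rightharpoonup u^0$ to a strongly convergent one in $PC$; the relevant equicontinuity and relative-compactness estimates are exactly those established in Step 3 of the preceding existence proof. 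Writing $z^n = y^n - y^0$, subtracting the two mild-solution identities, and bounding the $g$- and $h$-differences by the Lipschitz constants $\tilde L_g, \tilde L_h$ from Assumptions \ref{assump1} and \ref{assump2} as in Theorem \ref{ty}, I obtain an estimate of the form
\[
\sup_{s \le t}\mathbb{E}\|z^n(s)\|^2 \le a_n + C\int_0^t \sup_{\sigma \le \tau}\mathbb{E}\|z^n(\sigma)\|^2\,d\tau,
\]
where $a_n \to 0$ collects the control-driven terms and $C$ depends only on $M,T,\tilde L_g,\tilde L_h,N$. Gronwall's inequality then yields $\|z^n\|_{PC}^2 \le a_n e^{CT} \to 0$, i.e. $y^n \to y^0$ strongly in $PC$.

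Finally I conclude by lower semicontinuity. By $(A_1)$–$(A_3)$ the integrand $l$ is a normal integrand that is convex in the control variable, so the Balder theorem on weak lower semicontinuity of integral functionals applies: strong convergence $y^n \to y^0$ together with weak convergence $u^n \rightharpoonup u^0$ gives
\[
J(y^0,u^0) \le \liminf_{n\to\infty} J(y^n,u^n) = m.
\]
Since $(y^0,u^0)$ is admissible, $J(y^0,u^0) \ge m$ as well, whence $J(y^0,u^0) = m$ and $(y^0,u^0)$ is the sought optimal pair. I expect the main obstacle to be the third step, namely converting the merely weak convergence of the controls into strong convergence of the states; this is precisely where the compactness of the semigroup is indispensable, for without it one cannot rule out oscillation in the state trajectories and the lower-semicontinuity argument would break down.
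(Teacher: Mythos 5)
Your overall skeleton --- minimizing sequence, coercivity from \((A_4)\), weak sequential compactness of \(U_{ad}\) plus Mazur's theorem to get \(u^0\in U_{ad}\), strong convergence of the states, then Balder's theorem --- coincides with the paper's proof, and your steps 1, 2 and 4 are sound. The genuine gap is your step 3. The claim that compactness of \(T(t)\) makes \(u\mapsto\int_{t_k}^{\cdot}T(\cdot-\tau)Bu(\tau)\,d\tau\) completely continuous (weak-to-strong) is a \emph{deterministic} fact; it fails on \(L^2_F([0,T],U)\), whose elements are random processes. Compactness of \(T(t)\) acts only in the spatial variable \(H\) and cannot suppress oscillations in the \(\omega\)-direction. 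Concretely, fix \(0<a<T\) and \(v\in U\) with \(Bv\neq 0\), and let \(\{\xi_n\}\) be an orthonormal sequence of bounded \(\mathcal{F}_a\)-measurable real random variables (such sequences exist because \(\mathcal{F}_a=\sigma\{W(\tau):\tau\le a\}\) is nonatomic). The processes \(u^n(t)=\mathbf{1}_{[a,T]}(t)\,\xi_n\,v\) are adapted, bounded in \(L^2_F([0,T],U)\) (and lie in \(U_{ad}\) whenever \(\pm v\in Y\)), and converge weakly to \(0\); yet for \(t>a\)
\[
\mathbb{E}\left\|\int_0^t T(t-\tau)Bu^n(\tau)\,d\tau\right\|^2
=\mathbb{E}\left[\xi_n^2\right]\left\|\int_a^t T(t-\tau)Bv\,d\tau\right\|^2
=\left\|\int_a^t T(t-\tau)Bv\,d\tau\right\|^2,
\]
a nonzero constant independent of \(n\). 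So the control-driven term does \emph{not} converge strongly, your quantity \(a_n\) need not tend to zero, and the Gronwall step collapses. Your argument would be correct for deterministic controls; it is precisely the probability-space direction that defeats semigroup compactness.

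For comparison, the paper diverges from you at exactly this crux: it proves the Lipschitz-type estimate \(\|y^m-y^0\|^2_{PC}\le C^{*}\|u^m-u^0\|^2_{L^2_F([0,T],U)}\) (essentially your Gronwall bound, made explicit via the smallness conditions inherited from Theorem \ref{ty}) and then concludes by asserting that \(\|u^m-u^0\|_{L^2_F([0,T],U)}\to 0\) \emph{strongly}. That assertion is stronger than what boundedness of \(\{u^m\}\) actually yields (only weak convergence of a subsequence), so the paper closes the gap by assertion rather than by any compactness mechanism; your instinct that weak convergence is all one is entitled to is correct, but the repair you propose is not available in the stochastic setting. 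A rigorous completion requires an additional ingredient beyond compactness of \(T(t)\) --- for instance, hypotheses or a construction guaranteeing strong \(L^2_F\)-convergence of some minimizing sequence of controls, or a lower-semicontinuity argument for \(J\) that does not pass through strong convergence of the states.
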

\begin{proof}
Without compromising generality, we suppose that
\begin{align*}
	\inf\Big\{ J(y^{u},u) \mid u \in U_{ad}\Big\} = \varepsilon < +\infty.
\end{align*}
If this were not the case, there would be nothing to prove. From assumption \((A_{4})\), it follows that \(\varepsilon > -\infty\). Using the definition of the infimum, we can identify a minimizing sequence of feasible pairs \((y^{m}, u^{m}) \in PC([0,T], L^{2}(\Omega, H)) \times U_{ad}\), satisfying
\[
J(y^{m}, u^{m}) \to \varepsilon \quad \text{as} \quad m \to \infty,
\]
where \(y^{m}\) represents a mild solution of the system \eqref{eq1} associated with \(u^{m} \in U_{ad}\).

We observe that the sequence \(\{u^{m}\}\in U_{ad} \) for \( m = 1, 2, \dots \), which implies that $\{u^{m}\}\in L^{2}_{F}([0,T],U)$ is bounded.  Consequently, there is a function \( u^{0} \in L^{2}_{F}([0,T],U) \) and a subsequence of \(\{u^{m}\}\) s. t.
\[
u^{m} \rightarrow u^{0} \quad (m \to \infty).
\]
Since \( U_{ad} \) is both convex and closed, by the Marzur theorem \cite{11}, we infer that \( u^{0} \in U_{ad} \).

Let \( y^0 \) denote the mild solution of equation \eqref{eq1} corresponding to \( u^0 \). Given the boundedness of \(\{u^m\}\), \(\{u^0\}\), we can assert the existence of a positive number \( r_0 \) s. t.
\[
\|y^m\|^{2}_{PC} \leq r_0, \quad \|y^0\|^{2}_{PC} \leq r_0.
\]
 For \( t \in [t_{0},t_{1}] \), we  obtain
 \begin{align*}
 	\mathbb{E}\Vert y^{m}(t)-y^{0}(t)\Vert^{2}&\leq 3\mathbb{E}\bigg\Vert \int_{0}^{t}  T(t-\tau)B\big(u^{m}(\tau)-u^{0}(\tau)\big)\, d\tau\bigg\Vert^{2}\\
 	&+ 3\mathbb{E}\bigg\Vert \int_{0}^{t}  T(t-\tau)\big(g(\tau,y^{m}(\tau))-g(\tau,y^{0}(\tau))\big)\, d\tau\bigg\Vert^{2}\\
    &+ 3\mathbb{E}\bigg\Vert \int_{0}^{t}  T(t-\tau)\big(h(\tau,y^{m}(\tau))-h(\tau,y^{0}(\tau))\big)\,dW(\tau)\bigg\Vert^{2}\\
 	&\leq 3M^{2}\Vert B\Vert^{2}\int_{0}^{t} \mathbb{E} \Vert u^{m}(\tau)-u^{0}(\tau)\Vert^{2} \, d\tau\\
    &+3M^{2}\mathbb{E}\bigg(\int_{0}^{t} \Vert g(\tau,y^{m}(\tau))-g(\tau,y^{0}(\tau))\Vert \, d\tau\bigg)^{2}\\
 	&+ 3M^{2}\mathbb{E}\bigg(\int_{0}^{t} \Vert h(\tau,y^{m}(\tau))-h(\tau,y^{0}(\tau))\Vert \, dW(\tau)\bigg)^{2}\\
 	&\leq 3M^{2}\Vert B\Vert^{2} \Vert u^{m}-u^{0}\Vert^{2}_{L^{2}_{F}([0,T],U)}\\
 	&+ 3M^{2}T\int_{0}^{t} \mathbb{E}\Vert g(\tau,y^{m}(\tau))-g(\tau,y^{0}(\tau))\Vert^{2} \, d\tau\\
 	&+ 3M^{2}\int_{0}^{t} \mathbb{E}\Vert h(\tau,y^{m}(\tau))-h(\tau,y^{0}(\tau))\Vert^{2} \, d\tau\\
 	&\leq 3M^{2}\Vert B\Vert^{2} \Vert u^{m}-u^{0}\Vert^{2}_{L^{2}_{F}([0,T],U)}\\
 	&+ 3M^{2}(T^{2} \tilde{L}_{g}+T \tilde{L}_{h})\Vert y^{m}-y^{0}\Vert^{2}_{PC},
 \end{align*}
which means
\begin{align}\label{y1}
	\Vert y^{m}-y^{0}\Vert^{2}_{PC}&\leq \frac{3M^{2}\Vert B\Vert^{2} \Vert u^{m}-u^{0}\Vert^{2}_{L^{2}_{F}([0,T],U)}}{1-3M^{2}(T^{2} \tilde{L}_{g}+T \tilde{L}_{h})}.
\end{align}

For $t_k < t \leq t_{k+1}, \, k \geq 1,$  by using the  $\mathscr{L}-$ continuity of \( h \) and the triangular inequality and C-S inequality, we have

\[\begin{aligned}
	&\mathbb{E}\Vert y^{m}(t)-y^{0}(t)\Vert^{2}\\	\leq& 6\mathbb{E}\bigg\Vert  T(t-t_{k})\sum_{i=1}^{k}\prod_{j=k}^{i+1}(\mathcal{I}+D_{j}) T(t_{j}-t_{j-1})
	(\mathcal{I}+D_{i})\int_{t_{i-1}}^{t_{i}} T(t_{i}-\tau)\big(g(\tau,y^{m}(\tau))-g(\tau,y^{0}(\tau))\big) \, d\tau\bigg\Vert^{2}\\
	+&6\mathbb{E}\bigg\Vert  T(t-t_{k})\sum_{i=1}^{k}\prod_{j=k}^{i+1}(\mathcal{I}+D_{j}) T(t_{j}-t_{j-1})
	(\mathcal{I}+D_{i})\int_{t_{i-1}}^{t_{i}} T(t_{i}-\tau)\big(h(\tau,y^{m}(\tau))-h(\tau,y^{0}(\tau))\big) \, dW(\tau)\bigg\Vert^{2}\\
    +&6\mathbb{E}\bigg\Vert \int_{t_{k}}^{t}  T(t-\tau)\big(g(\tau,y^{m}(\tau))-g(\tau,y^{0}(\tau))\big)\, d\tau\bigg\Vert^{2}
	+ 6\mathbb{E}\bigg\Vert \int_{t_{k}}^{t}  T(t-\tau)\big(h(\tau,y^{m}(\tau))-h(\tau,y^{0}(\tau))\big)\, dW(\tau)\bigg\Vert^{2}\\
     +&  6\mathbb{E}\bigg\Vert  T(t-t_{k})\sum_{i=1}^{k}\prod_{j=k}^{i+1}(\mathcal{I}+D_{j}) T(t_{j}-t_{j-1})
	(\mathcal{I}+D_{i})\int_{t_{i-1}}^{t_{i}} T(t_{i}-\tau)B\big(u^{m}(\tau)-u^{0}(\tau)\big) \, d\tau\bigg\Vert^{2}\\
	+&6\mathbb{E}\bigg\Vert \int_{t_{k}}^{t}  T(t-\tau)B\big(u^{m}(\tau)-u^{0}(\tau)\big)\, d\tau\bigg\Vert^{2}
=\mathscr{I}_{1}+\mathscr{I}_{2}+\mathscr{I}_{3}+\mathscr{I}_{4}+\mathscr{I}_5 +\mathscr{I}_6.
\end{aligned}\]

Using the $\mathscr{L}-$ continuity of \( f \) and the triangular inequality and $C-S$- inequality, we get:

\[
\begin{aligned}
	\mathscr{I}_{1}&= 6\mathbb{E}\bigg\Vert  T(t-t_{k})\sum_{i=1}^{k}\prod_{j=k}^{i+1}(\mathcal{I}+D_{j}) T(t_{j}-t_{j-1})
	(\mathcal{I}+D_{i})
	 \int_{t_{i-1}}^{t_{i}} T(t_{i}-\tau)\big(g(\tau,y^{m}(\tau))-g(\tau,y^{0}(\tau))\big) \, d\tau\bigg\Vert^{2}\\
	&\leq 6 M^{2} \mathbb{E}\bigg(\sum_{i=1}^{k}\prod_{j=k}^{i+1}(1+\Vert D_{j}\Vert)\Vert  T(t_{j}-t_{j-1})\Vert
	(1+\Vert D_{i}\Vert)
    \int_{t_{i-1}}^{t_{i}}\Vert g(\tau,y^{m}(\tau))-g(\tau,y^{0}(\tau))\Vert \, d\tau\bigg)^{2}\\
    &=6 M^{2} \mathbb{E}\bigg(\sum_{i=1}^{k}C_{i} \int_{t_{i-1}}^{t_{i}}\Vert g(\tau,y^{m}(\tau))-g(\tau,y^{0}(\tau))\Vert \, d\tau\bigg)^{2}\\
     &\leq 6 M^{2}\sum_{i=1}^{k}C^{2}_{i} \sum_{i=1}^{k}\mathbb{E}\bigg(\int_{t_{i-1}}^{t_{i}}\Vert g(\tau,y^{m}(\tau))-g(\tau,y^{0}(\tau))\Vert \, d\tau\bigg)^{2} \\
     &\leq 6 M^{2}T\sum_{i=1}^{k}C^{2}_{i} \sum_{i=1}^{k}\int_{t_{i-1}}^{t_{i}}\mathbb{E}\Vert g(\tau,y^{m}(\tau))-g(\tau,y^{0}(\tau))\Vert^{2} \, d\tau \\
			&\leq 6 M^{2}TN \sum_{i=1}^{k}\int_{t_{i-1}}^{t_{i}}\tilde{L}_{g}\mathbb{E}\Vert y^{m}(\tau)-y^{0}(\tau)\Vert^{2} \, d\tau \\
			&\leq 6 M^{2}TN\tilde{L}_{g} \sum_{i=1}^{k}\int_{t_{i-1}}^{t_{i}}\mathbb{E}\Vert y^{m}(\tau)-y^{0}(\tau)\Vert^{2} \, d\tau \\
&\leq 6 M^{2}T^{2}N\tilde{L}_{g}\Vert y^{m}-y^{0}\Vert^{2}_{PC},
\end{aligned}
\]
where \begin{align*}
	C_{i}=\prod_{j=k}^{i+1}(1+\Vert D_{j}\Vert)\Vert  T(t_{j}-t_{j-1})\Vert
	(1+\Vert D_{i}\Vert),\quad	N=\sum_{i=1}^{k}C^{2}_{i}.
\end{align*}

Using Itô isometry and the  $\mathscr{L}-$ continuity of \( h \), we obtain:

\[
\begin{aligned}
\mathscr{I}_{2}	&=6\mathbb{E}\bigg\Vert  T(t-t_{k})\sum_{i=1}^{k}\prod_{j=k}^{i+1}(\mathcal{I}+D_{j}) T(t_{j}-t_{j-1})
(\mathcal{I}+D_{i})\int_{t_{i-1}}^{t_{i}} T(t_{i}-\tau)\big(h(\tau,y^{m}(\tau))-h(\tau,y^{0}(\tau))\big) \, dW(\tau)\bigg\Vert^{2}\\
	&\leq 6 M^{2} \mathbb{E}\bigg(\sum_{i=1}^{k}\prod_{j=k}^{i+1}(1+\Vert D_{j}\Vert)\Vert  T(t_{j}-t_{j-1})\Vert
(1+\Vert D_{i}\Vert)\int_{t_{i-1}}^{t_{i}}\Vert h(\tau,y^{m}(\tau))-h(\tau,y^{0}(\tau))\Vert \, dW(\tau)\bigg)^{2}\\
&=6 M^{2} \mathbb{E}\bigg(\sum_{i=1}^{k}C_{i} \int_{t_{i-1}}^{t_{i}}\Vert h(\tau,y^{m}(\tau))-h(\tau,y^{0}(\tau))\Vert \, dW(\tau)\bigg)^{2}\\
 &\leq 6 M^{2}\sum_{i=1}^{k}C^{2}_{i} \sum_{i=1}^{k}\mathbb{E}\bigg(\int_{t_{i-1}}^{t_{i}}\Vert h(\tau,y^{m}(\tau))-h(\tau,y^{0}(\tau))\Vert \, dW(\tau)\bigg)^{2} \\
&\leq 6 M^{2}\sum_{i=1}^{k}C^{2}_{i} \sum_{i=1}^{k}\int_{t_{i-1}}^{t_{i}}\mathbb{E}\Vert h(\tau,y^{m}(\tau))-h(\tau,y^{0}(\tau))\Vert^{2} \, d\tau \leq 6 M^{2}N \sum_{i=1}^{k}\int_{t_{i-1}}^{t_{i}}\tilde{L}_{h}\mathbb{E}\Vert y^{m}(\tau)-y^{0}(\tau)\Vert^{2} \, d\tau \\
&\leq 6 M^{2}N\tilde{L}_{h} \sum_{i=1}^{k}\int_{t_{i-1}}^{t_{i}}\mathbb{E}\Vert y^{m}(\tau)-y^{0}(\tau)\Vert^{2} \, d\tau \leq 6 M^{2}TN\tilde{L}_{h}\Vert y^{m}-y^{0}\Vert^{2}_{PC}.
\end{aligned}
\]
\bigskip

Using Itô isometry and the  $\mathscr{L}-$ continuity of \( f \) and \( h \):

\[
\begin{aligned}
	\mathscr{I}_{3}	&=6\mathbb{E}\bigg\Vert \int_{t_{k}}^{t}  T(t-\tau)\big(g(\tau,y^{m}(\tau))-g(\tau,y^{0}(\tau))\big)\, d\tau\bigg\Vert^{2}\\
	&\leq 6M^{2}T\int_{t_{k}}^{t} \mathbb{E}\Vert g(\tau,y^{m}(\tau))-g(\tau,y^{0}(\tau))\Vert^{2} \, d\tau\\
	&\leq 6M^{2}T\int_{t_{k}}^{t} \tilde{L}_{g}\mathbb{E}\Vert y^{m}(\tau)-y^{0}(\tau)\Vert^{2} \, d\tau\\
		&\leq6 M^{2}T^{2}\tilde{L}_{g}\Vert y^{m}-y^{0}\Vert^{2}_{PC},
\end{aligned}
\]

\[
\begin{aligned}
	\mathscr{I}_{4}	&=6\mathbb{E}\bigg\Vert \int_{t_{k}}^{t}  T(t-\tau)\big(h(\tau,y^{m}(\tau))-h(\tau,y^{0}(\tau))\big)\, dW(\tau)\bigg\Vert^{2}\\
	&\leq 6M^{2}\int_{t_{k}}^{t} \mathbb{E}\Vert h(\tau,y^{m}(\tau))-h(\tau,y^{0}(\tau))\Vert^{2} \, d\tau\\
		&\leq6 M^{2}\int_{t_{k}}^{t} \tilde{L}_{h}\mathbb{E}\Vert y^{m}(\tau)-y^{0}(\tau)\Vert^{2} \, d\tau\\
	&\leq6 M^{2}T\tilde{L}_{h}\Vert y^{m}-y^{0}\Vert^{2}_{PC}.
\end{aligned}
\]
\bigskip

Following a similar process as described above, we have

\[\begin{aligned}
\mathscr{I}_{5}&=6\mathbb{E}\bigg\Vert  T(t-t_{k})\sum_{i=1}^{k}\prod_{j=k}^{i+1}(\mathcal{I}+D_{j}) T(t_{j}-t_{j-1})
	(\mathcal{I}+D_{i})
	\int_{t_{i-1}}^{t_{i}} T(t_{i}-\tau)B\big(u^{m}(\tau)-u^{0}(\tau)\big) \, d\tau\bigg\Vert^{2}\\
	&\leq 6 M^{2}\Vert B\Vert^{2} \mathbb{E}\bigg(\sum_{i=1}^{k}\prod_{j=k}^{i+1}(1+\Vert D_{j}\Vert)\Vert  T(t_{j}-t_{j-1})\Vert
	(1+\Vert D_{i}\Vert)
	 \int_{t_{i-1}}^{t_{i}}\Vert u^{m}(\tau)-u^{0}(\tau)\Vert \, d\tau\bigg)^{2}\\
	&=6 M^{2}\Vert B\Vert^{2} \mathbb{E}\bigg(\sum_{i=1}^{k}C_{i} \int_{t_{i-1}}^{t_{i}}\Vert  u^{m}(\tau)-u^{0}(\tau)\Vert \, d\tau\bigg)^{2}\\
	&\leq 6 M^{2}\Vert B\Vert^{2}\sum_{i=1}^{k}C^{2}_{i} \sum_{i=1}^{k}\mathbb{E}\bigg(\int_{t_{i-1}}^{t_{i}}\Vert  u^{m}(\tau)-u^{0}(\tau)\Vert \, d\tau\bigg)^{2} \\
	&\leq 6 M^{2}\Vert B\Vert^{2}T\sum_{i=1}^{k}C^{2}_{i} \sum_{i=1}^{k}\int_{t_{i-1}}^{t_{i}}\mathbb{E}\Vert u^{m}(\tau)-u^{0}(\tau)\Vert^{2} \, d\tau \\
	&= 6 M^{2}\Vert B\Vert^{2}TN \sum_{i=1}^{k}\int_{t_{i-1}}^{t_{i}}\mathbb{E}\Vert u^{m}(\tau)-u^{0}(\tau)\Vert^{2} \, d\tau \\
	&= 6 M^{2}\Vert B\Vert^{2}TN\Vert u^{m}-u^{0}\Vert^{2}_{L^{2}_{F}([0,T],U)},
\end{aligned}\]
\begin{align*}
	\mathscr{I}_{6}&=6\mathbb{E}\bigg\Vert \int_{t_{k}}^{t}  T(t-\tau)B\big(u^{m}(\tau)-u^{0}(\tau)\big)\, d\tau\bigg\Vert^{2}\\
	&\leq 6M^{2}\Vert B\Vert^{2}T\int_{t_{k}}^{t} \mathbb{E}\Vert u^{m}(\tau)-u^{0})\Vert^{2} \, d\tau\\
&\leq 6M^{2}\Vert B\Vert^{2}T\Vert u^{m}-u^{0}\Vert^{2}_{L^{2}_{F}([0,T],U)}.
\end{align*}
Combining all the estimates, we achieve:

\[
\begin{aligned}
\mathbb{E}\Vert y^{m}(t)-y^{0}(t)\Vert^{2} &\leq 6 M^{2}(N+1)(T^{2} \tilde{L}_{g}+T\tilde{L}_{h})\Vert y^{m}-y^{0}\Vert^{2}_{PC}+6 M^{2}T(N+1)\Vert B\Vert^{2}\Vert u^{m}-u^{0}\Vert^{2}_{L^{2}_{F}([0,T],U)},
\end{aligned}
\]
which means
\begin{align}\label{y2}
	\Vert y^{m}-y^{0}\Vert^{2}_{PC}&\leq \frac{6 M^{2}T(N+1)\Vert B\Vert^{2}\Vert u^{m}-u^{0}\Vert^{2}_{L^{2}_{F}([0,T],U)}}{1-6 M^{2}(N+1)(T^{2} \tilde{L}_{g}+T\tilde{L}_{h})}.
\end{align}
Using \eqref{y1} and \eqref{y2}, we get
\begin{align}
	\Vert y^{m}-y^{0}\Vert^{2}_{PC}&\leq C^{*}\Vert u^{m}-u^{0}\Vert^{2}_{L^{2}_{F}([0,T],U)},
\end{align}
where
\begin{align*}
	C^{*}&=\max\Bigg\{ \frac{3M^{2}\Vert B\Vert^{2} }{1-3M^{2}(T^{2}\tilde{L}_{g}+T\tilde{L}_{h})} ,\frac{6 M^{2}T(N+1)\Vert B\Vert^{2}}{1-6 M^{2}(N+1)(T^{2} \tilde{L}_{g}+T\tilde{L}_{h})}\Bigg\}.
\end{align*}
Since,
\begin{align*}
	\Vert u^{m}-u^{0}\Vert^{2}_{L^{2}_{F}([0,T],U)}\xrightarrow{s} 0,\quad (m\to \infty).
\end{align*}
As a result,
\begin{align*}
	\Vert y^{m}-y^{0}\Vert^{2}_{PC}\xrightarrow{s} 0,\quad (m\to \infty).
\end{align*}
Thus, by leveraging conditions \((A_1)-(A_4)\) together with Balder's theorem (see Theorem 2.1 in \cite{13}), it follows that the mapping \((y, u) \mapsto \mathbb{E}\left(\int_0^T l(t, y(t), u(t)) \, dt\right)\) is sequentially lower semicontinuous. This holds with respect to the strong topology of \(L^1_F([0,T], H)\) and the weak topology of \(L^2_F([0,T], U) \subset L^1_F([0,T], U)\). Consequently, the functional \(J\) is weakly lower semicontinuous on \(L^2_F([0,T], U)\). Therefore, we have:
\begin{align*}
	\varepsilon &= \lim_{m \to \infty} \mathbb{E}\left(\int_0^T l(t, y^m(t), u^m(t)) \, dt\right) \geq \mathbb{E}\left(\int_0^T l(t, y^0(t), u^0(t)) \, dt\right) = J(y^0, u^0) \geq \varepsilon,
\end{align*}
which implies that \(u^0 \in U_{ad}\) is a minimizer of \(J\).
\end{proof}

\section{Application}

Consider the stochastic impulsive system described by:

\begin{equation}\label{pp}
	\begin{cases}
		y'(t) = A y(t) + B u(t) + g(t, y(t)) + h(t, y(t)) \frac{dW(t)}{dt}, & t \in [0,1] \setminus \{ \frac{1}{2} \}, \\
		\Delta y\left(\frac{1}{2}\right) = D_{1} y\left(\frac{1}{2}\right) + E_{1} v_{1}, \\
		y(0) = 0.
	\end{cases}
\end{equation}

where \(H = U = L^2[0,1]\).

Define the operators and functions as follows:

\begin{align}
	\begin{cases}
		A  = -\frac{d^{2}}{d t^{2}} + \frac{d}{dt}, \\
		B = D_{1} = E_{1} = \mathcal{I}, \\
		g(t, y(t)) = \frac{2}{5} \cos(t) + \frac{y(t)}{t + 5}, \\
		h(t, y(t)) = \frac{1}{5} \left(\frac{2}{1 + e^t} + \frac{|y(t)|}{1 + |y(t)|}\right), \\
		v_{1} = \sin(\pi t).
	\end{cases}
\end{align}

Here, \(A\) is defined as an operator \(A: D(A) \subset L^2[0,1] \to L^2[0,1]\) with

\begin{align*}
	D(A) = \{y \in L^2[0,1] : y(0) = y(1) = 0\}.
\end{align*}

Substituting these definitions into \eqref{pp}, we receive:

\begin{equation}\label{ppp2}
	\begin{cases}
		y^{\prime\prime}(t) =  u(t) + \frac{2}{5} \cos(t) + \frac{y(t)}{t + 5} + \frac{1}{5} \left(\frac{2}{1 + e^t} + \frac{|y(t)|}{1 + |y(t)|}\right) \frac{dW(t)}{dt}, & t \in [0,1] \setminus \left\{\frac{1}{2}\right\}, \\
		y\left(\frac{1}{2}^+\right) = y\left(\frac{1}{2}^-\right) +  y\left(\frac{1}{2}\right) + \sin(\pi t), \\
		y(0) = 0.
	\end{cases}
\end{equation}

The problem \eqref{ppp2} can be framed in the abstract form of \eqref{eq1}, with the cost functional given by:

\begin{align*}
	J(y,u) = \mathbb{E} \int_{0}^{1}\big( \| y \|^2 + \| u \|^2 \big)  dt.
\end{align*}

The functions \(g\) and \(\sigma\) are shown to satisfy the  $\mathscr{L}-$ conditions specified in Assumptions \ref{assump1} and \ref{assump2}, with parameters \(L_g = L_h = \frac{2}{5}\) and \(\tilde{L}_g = \tilde{L}_h = \frac{1}{25}\).

Given that all assumptions \((A_1)-(A_4)\) are satisfied, Theorem \ref{t3} ensures the existence of at least one optimal pair \((y^0, u^0)\) s. t. :

\[ J(y^0, u^0) \leq J(y^u, u), \quad \forall (y^u, u) \in PC([0,T], L^2(\Omega, H)) \times U_{ad}. \]

Thus, Theorem \ref{t3} is applicable to the given stochastic impulsive system, guaranteeing the existence of an optimal solution to the problem.

\section{Conclusion}

This study delves into the solvability and optimal control of a class of  SDEs within a Hilbert space framework with impulsive effects. Specifically, we established the existence/uniqueness of mild solutions for the system by employing fixed-point theorems and deriving sufficient conditions for the existence of optimal control pairs. These results ensure both the feasibility and optimality of the proposed control strategies. Furthermore, to demonstrate the practical relevance and effectiveness of our theoretical findings, we presented a comprehensive example showcasing their application.

Beyond addressing the immediate problems of existence and control, this research contributes to a deeper understanding of impulsive stochastic systems and their dynamic behavior under stochastic influences. It highlights potential extensions to more complex scenarios, including systems with state-dependent impulses or infinite delays. The work serves as a robust foundation for future studies aiming to refine control strategies, extend the theoretical framework, or explore applications in fields such as engineering, finance, and biological modeling.
Here are some key features of impulsive SDEs: 
\begin{itemize}
    \item Continuous evolution governed by the SDE between impulses.
    \item Discrete, instantaneous changes at predefined times or state-dependent moments.
    \item Can model real-world phenomena like control interventions, economic shocks, or biological mutations.
    \item The impulsive function \( I_k \) can depend on time, the state, or both.
    \item The stochastic term accounts for randomness in continuous evolution.
    \item Impulses introduce deterministic or stochastic discontinuities.
\end{itemize}
In the future, the controllability of impulsive SDEs will continue to be a challenging and highly relevant area of our research. Advancements in both analytical methods and computational techniques are expected to provide deeper insights into the interaction between continuous dynamics, randomness, and discrete impulsive interventions. As systems with impulsive effects become more common in fields like engineering, finance, and biology, it will be crucial to develop more sophisticated control strategies that account for both the stochastic nature of the system and the instantaneous changes due to impulses. The integration of machine learning, optimization algorithms, and advanced numerical solvers will likely play a pivotal role in designing efficient controllers and solving complex controllability problems. Additionally, future studies may focus on expanding the applicability of these methods to more general classes of impulsive SDEs and real-world scenarios, pushing the boundaries of control theory to handle increasingly complex, real-time systems.


\begin{thebibliography}{99}%
	\bibitem{1} Mainardi, F., Paradisi, P., \& Gorenflo, R. (2007). Probability distributions generated by fractional diffusion equations. arXiv preprint arXiv:0704.0320.
	\bibitem{2} Mahmudov, N. I. (2024). A study on approximate controllability of linear impulsive equations in Hilbert spaces. Quaestiones Mathematicae, 1-16.
	\bibitem{3} Periago, F., \& Straub, B. (2002). A functional calculus for almost sectorial operators and applications to abstract evolution equations. Journal of Evolution Equations, 2(1), 41-68.
	\bibitem{4} Asadzade, J.A., \& Mahmudov, N.I. (2024). Approximate Controllability of Linear Fractional Impulsive Evolution Equations in Hilbert Spaces. arXiv preprint  arXiv:2406.15114
	\bibitem{5} Asadzade, J. A., \& Mahmudov, N. I. (2024). Finite time stability analysis for fractional stochastic neutral delay differential equations. Journal of Applied Mathematics and Computing, 1-25.
	\bibitem{6}  Barraez, D., Leiva, H., Merentes, N., \& Narváez, M. (2011). Exact controllability of semilinear stochastic evolution equation.
	\bibitem{7} Kilbas, A. A., Srivastava, H. M., \& Trujillo, J. J. (2006). Theory and applications of fractional differential equations (Vol. 204). elsevier.
	\bibitem{8} Zhou, Y., \& Jiao, F. (2010). Nonlocal Cauchy problem for fractional evolution equations. Nonlinear analysis: real world applications, 11(5), 4465-4475.
	\bibitem{9} Da Prato, G., \& Zabczyk, J. (2014). Stochastic equations in infinite dimensions (Vol. 152). Cambridge university press.
	\bibitem{10} Mao, X. R. (1997). Stochastic differential equations and their applications, horwood publ. House, Chichester.
	\bibitem{11} Heinonen, J., Kipelainen, T., \& Martio, O. (2018). Nonlinear potential theory of degenerate elliptic equations. Courier Dover Publications.
	\bibitem{12}  Ding, Y., \& Niu, J. (2024). Solvability and optimal controls of fractional impulsive stochastic evolution equations with nonlocal conditions. Journal of Applied Analysis \& Computation, 14(5), 2622-2642.
	\bibitem{13} Balder, E. J. (1987). Necessary and sufficient conditions for L1-strong-weak lower semicontinuity of integral functionals. Nonlinear Analysis: Theory, Methods \& Applications, 11(12), 1399-1404.
	\bibitem{14} Dhayal, R., Malik, M., \& Abbas, S. (2021). Solvability and optimal controls of non-instantaneous impulsive stochastic fractional differential equation of order $q\in(1, 2)$. Stochastics, 93(5), 780-802.
	\bibitem{15} Bainov, D., \& Simeonov, P. (2017). Impulsive differential equations: periodic solutions and applications. Routledge.
	\bibitem{16} Balasubramaniam, P., \& Tamilalagan, P. (2017). The solvability and optimal controls for impulsive fractional stochastic integro-differential equations via resolvent operators. Journal of Optimization Theory and Applications, 174, 139-155.
	\bibitem{17} Benchohra, M. (2006). Impulsive differential equations and inclusions (Vol. 2). Hindawi Publishing Corporation.
	\bibitem{18} Byszewski, L. (1991). Theorems about the existence and uniqueness of solutions of a semilinear evolution nonlocal Cauchy problem. Journal of Mathematical analysis and Applications, 162(2), 494-505.
	\bibitem{19} Yan, Z. (2021). Time optimal control of a Clarke subdifferential type stochastic evolution inclusion in Hilbert spaces. Applied Mathematics \& Optimization, 84(3), 3083-3110.
	\bibitem{20} Mahmudov, N. I., Vijayakumar, V., \& Murugesu, R. (2016). Approximate controllability of second-order evolution differential inclusions in Hilbert spaces. Mediterranean Journal of Mathematics, 13, 3433-3454.
	\bibitem{21} Anukiruthika, K., Durga, N., \& Muthukumar, P. (2023). Optimal control of non-instantaneous impulsive second-order stochastic McKean–Vlasov evolution system with Clarke subdifferential. International Journal of Nonlinear Sciences and Numerical Simulation, 24(6), 2061-2087.
	\bibitem{22} Zhou, J., \& Liu, B. (2010). Optimal control problem for stochastic evolution equations in Hilbert spaces. International journal of control, 83(9), 1771-1784.
	\bibitem{23} Al-Hussein, A. (2011). Necessary conditions for optimal control of stochastic evolution equations in Hilbert spaces. Applied Mathematics \& Optimization, 63, 385-400.
	\bibitem{24} Mahmudov, N. I., \& McKibben, M. A. (2007). On backward stochastic evolution equations in Hilbert space and optimal control. Nonlinear Analysis Series A: Theory, Methods, and Applications, 67(4), 1262.
	\bibitem{25} Subalakshmi, R., \& Radhakrishnan, B. (2021). A study on approximate and exact controllability of impulsive stochastic neutral integrodifferential evolution system in Hilbert spaces. International Journal of Nonlinear Analysis and Applications, 12(Special Issue), 1731-1743.
	\bibitem{26} Levajković, T., Mena, H., \& Tuffaha, A. (2016). The stochastic linear quadratic optimal control problem in Hilbert spaces: A polynomial chaos approach. Evol. Equ. Control Theory, 5(1), 105-134.
	\bibitem{27} Chadha, A., \& Bora, S. N. (2021). Solvability of control problem for a  nonlocal neutral stochastic fractional integro-differential inclusion with impulses. Mathematical Reports, 23(3), 265-294.
	\bibitem{28} Carmichael, N., \& Quinn, M. D. (1985). Fixed point methods in nonlinear control. In Distributed Parameter Systems: Proceedings of the 2nd International Conference Vorau, Austria 1984 (pp. 24-51). Springer Berlin Heidelberg.
	\bibitem{29} Subalakshmi, R., \& Balachandran, K. (2009). Approximate controllability of nonlinear stochastic impulsive integrodifferential systems in Hilbert spaces. Chaos, Solitons \& Fractals, 42(4), 2035-2046.
	\bibitem{30} Mahmudov, N. I. (2003). Approximate controllability of semilinear deterministic and stochastic evolution equations in abstract spaces. SIAM journal on control and optimization, 42(5), 1604-1622.
	\bibitem{31} Ahmadova, A. (2023). Approximate controllability of stochastic degenerate evolution equations: Decomposition of a Hilbert space. Differential Equations and Dynamical Systems, 1-23.
	\bibitem{32} Sakthivel, R., Suganya, S., \& Anthoni, S. M. (2012). Approximate controllability of fractional stochastic evolution equations. Computers \& Mathematics with Applications, 63(3), 660-668.
	\bibitem{33} Asadzade, J. A., \& Mahmudov, N. I. (2024). Euler-Maruyama approximation for stochastic fractional neutral integro-differential equations with weakly singular kernel. Physica Scripta, 99(7), 075281.
	\bibitem{34} Bashirov, A. E., \& Mahmudov, N. I. (1999). On concepts of controllability for deterministic and stochastic systems. SIAM Journal on Control and Optimization, 37(6), 1808-1821.
	\bibitem{35} Chang, Y. K., Pei, Y., \& Ponce, R. (2019). Existence and optimal controls for fractional stochastic evolution equations of Sobolev type via fractional resolvent operators. Journal of Optimization Theory and Applications, 182, 558-572.
	\bibitem{36} Yan, Z., \& Jia, X. (2017). Optimal controls of fractional impulsive partial neutral stochastic integro-differential systems with infinite delay in Hilbert spaces. International Journal of Control, Automation and Systems, 15(3), 1051-1068.
	\bibitem{37} Li, X., \& Liu, Z. (2015). The solvability and optimal controls of impulsive fractional semilinear differential equations.
	\bibitem{38} Yan, Z., \& Lu, F. (2018). Solvability and optimal controls of a fractional impulsive stochastic partial integro-differential equation with state-dependent delay. Acta Applicandae Mathematicae, 155(1), 57-84.
	\bibitem{39} Sheng, L., Hu, W., \& Su, Y. H. (2024). Existence and optimal controls of non-autonomous for impulsive evolution equation without Lipschitz assumption. Boundary Value Problems, 2024(1), 17.
	\bibitem{40} Samoilenko, A. M., \& Perestyuk, N. A. (1995). Impulsive differential equations. World scientific.
	\bibitem{41} Lakshmikantham, V., \& Simeonov, P. S. (1989). Theory of impulsive differential
	equations (Vol. 6). World scientific.
	\bibitem{42} Mahmudov, N. I., \& Almatarneh, A. M. (2020). Stability of Ulam–Hyers and existence of solutions for impulsive time-delay semi-linear systems with non-permutable matrices. Mathematics, 8(9), 1493.

  \bibitem{43} Yin, Q. B., Shu, X. B., Guo, Y., \& Wang, Z. Y. (2024). Optimal control of stochastic differential equations with random impulses and the Hamilton–Jacobi–Bellman equation. Optimal Control Applications and Methods, 45(5), 2113-2135.

    \bibitem{44} Guo, Y., Shu, X. B., Xu, F., \& Yang, C. (2024). HJB equation for optimal control system with random impulses. Optimization, 73(4), 1303-1327.

    \bibitem{45} Zhou, B., Shu, X. B., Xu, F., Yang, F., \& Wang, Y. (2024). Exponential synchronization of dynamical complex networks via random impulsive scheme. Nonlinear Analysis: Modelling and Control, 29(4), 816-832.

    \bibitem{46} Asadzade, J. A., \& Mahmudov, N. I. (2024). Approximate controllability of impulsive semilinear evolution equations in Hilbert spaces. arXiv preprint arXiv:2411.02766.
\end{thebibliography}
\end{document}